\newtheorem{theorem}{Theorem}
\newtheorem{lemma}{Lemma}
\newtheorem{example}{Example}
\newtheorem{corollary}{Corollary}
\newtheorem*{remark*}{Remark}
\newtheorem*{question*}{Question}
\newcommand{\bbR}{\mathbb{R}}
\newcommand{\setA}{\mathscr{A}}
\newcommand{\setB}{\mathscr{B}}
\newcommand{\Hset}{\mathcal{H}}
\newcommand{\IRU}{\mathcal{U}}
\newcommand{\Lin}{\mathcal{L}}
\newcommand{\Kset}{\mathcal{K}}
\newcommand{\Mset}{\mathcal{M}}
\newcommand{\1}{\mathbf{1}}
\DeclareMathOperator*{\co}{co}
\newcommand{\transpose}{\mathsmaller{\mathsf{T}}}
\let\ge\geqslant
\let\le\leqslant
\begin{document}
\begin{frontmatter}
\title{Hourglass alternative and the finiteness conjecture for the spectral
characteristics of sets of non-negative matrices\tnoteref{rfs}}

\tnotetext[rfs]{Funded by the Russian Science Foundation, Project No.
14-50-00150.}

\author{Victor Kozyakin}

\address{Institute
for Information Transmission Problems\\ Russian Academy of Sciences\\ Bolshoj
Karetny lane 19, Moscow 127994 GSP-4, Russia}

\ead{kozyakin@iitp.ru}
\ead[url]{http://www.iitp.ru/en/users/46.htm}

\begin{abstract}

Recently Blondel, Nesterov and Protasov
proved~\cite{BN:SIAMJMAA09,NesPro:SIAMJMAA13} that the finiteness
conjecture holds for the generalized and the lower spectral radii of the
sets of non-negative matrices with independent row/column uncertainty. We
show that this result can be obtained as a simple consequence of the
so-called hourglass alternative used in~\cite{ACDDHK15},  by the author and
his companions, to analyze the minimax relations between the spectral radii
of matrix products. Axiomatization of the statements that constitute the
hourglass alternative makes it possible to define a new class of sets of
positive matrices having the finiteness property, which includes the sets
of non-negative matrices with independent row uncertainty. This class of
matrices, supplemented by the zero and identity matrices, forms a semiring
with the Minkowski operations of addition and multiplication of matrix
sets, which gives means to construct new sets of non-negative matrices
possessing the finiteness property for the generalized and the lower
spectral radii.
\end{abstract}

\begin{keyword}Matrix products\sep Non-negative matrices\sep Joint spectral radius\sep Lower spectral
radius\sep Finiteness conjecture

\MSC[2010]15A18\sep 15B48\sep 15A60
\end{keyword}

\end{frontmatter}

\section{Introduction}\label{S:intro}
One of the characteristics that describes the exponential growth rate of the
matrix products with factors from a set of matrices, is the so-called joint
or generalized spectral radius. Denote by $\Mset(N,M)$ the set of all real
$(N\times M)$-matrices. This set of matrices is naturally identified with the
space $\bbR^{N\times M}$ and therefore, depending on the context, it can be
interpreted as topological, metric or normed space. The \emph{joint spectral
radius}~\cite{RotaStr:IM60} of a set of matrices $\setA\subset\Mset(N,N)$ is
defined as the value of
\begin{equation}\label{E-GSRad0}
\rho(\setA)=
\adjustlimits\lim_{n\to\infty}\sup_{A_{i}\in\setA}\|A_{n}\cdots A_{1}\|^{1/n}
=\adjustlimits\inf_{n\ge 1}\sup_{A_{i}\in\setA}\|A_{n}\cdots A_{1}\|^{1/n},
\end{equation}
where $\|\cdot\|$ is a matrix norm on $\Mset(N,M)$ generated by the
corresponding vector norm on $\bbR^{N}$. The \emph{generalized spectral
radius}~\cite{DaubLag:LAA92,DaubLag:LAA01} of a bounded set of matrices
$\setA\subset\Mset(N,N)$ is the quantity
\begin{equation}\label{E-GSRad}
\hat{\rho}(\setA)=
\adjustlimits\lim_{n\to\infty}\sup_{A_{i}\in\setA}\rho(A_{n}\cdots A_{1})^{1/n}
=\adjustlimits\sup_{n\ge 1}\sup_{A_{i}\in\setA}\rho(A_{n}\cdots A_{1})^{1/n},
\end{equation}
where $\rho(\cdot)$ is the spectral radius of a matrix, i.e. the maximum of
modules of its eigenvalues. If the norms of matrices from the set $\setA$ are
uniformly bounded then by the Berger-Wang theorem~\cite{BerWang:LAA92}
$\rho(\setA)=\hat{\rho}(\setA)$. In the case when the set of matrices $\setA$
is compact (closed and bounded), the suprema over $A_{i}\in\setA$ in
\eqref{E-GSRad0} and \eqref{E-GSRad} may be replaced by maxima.

By replacing the suprema over $A_{i}\in\setA$ in \eqref{E-GSRad} with infima
(or with minima, in the case of a compact set of matrices) one can obtain the
so-called \emph{joint spectral subradius} or \emph{lower spectral
radius}~\cite{Gurv:LAA95}:
\begin{equation}\label{E-LSRad0}
\check{\rho}(\setA)=
\adjustlimits\lim_{n\to\infty}\inf_{A_{i}\in\setA}\|A_{n}\cdots A_{1}\|^{1/n}
=\adjustlimits\inf_{n\ge 1}\inf_{A_{i}\in\setA}\|A_{n}\cdots A_{1}\|^{1/n},
\end{equation}
which also (for arbitrary, not necessarily bounded set of matrices) may be
expressed in terms of the spectral radii instead of norms:
\begin{equation}\label{E-LSRad}
\check{\rho}(\setA)=
\adjustlimits\lim_{n\to\infty}\inf_{A_{i}\in\setA}\rho(A_{n}\cdots A_{1})^{1/n}
=\adjustlimits\inf_{n\ge 1}\inf_{A_{i}\in\setA}\rho(A_{n}\cdots A_{1})^{1/n},
\end{equation}
as was shown in~\cite[Theorem~B1]{Gurv:LAA95} for finite sets $\setA$, and
in~\cite[Lemma~1.12]{Theys:PhD05} and~\cite[Theorem~1]{Czornik:LAA05} for
arbitrary sets $\setA$.

The possibility of explicit calculation of the spectral characteristics of
sets of matrices is conventionally associated with the validity of the
\emph{finiteness conjecture} according to which the limit in formulas
\eqref{E-GSRad} and \eqref{E-LSRad} is attained at some finite value of $n$.
For the generalized spectral radius this conjecture was set up by Lagarias
and Wang~\cite{LagWang:LAA95} and subsequently disproved by  Bousch and
Mairesse~\cite{BM:JAMS02}. Later on there appeared a few alternative
counterexamples~\cite{BTV:SIAMJMA03,Koz:CDC05:e,Koz:INFOPROC06:e}. However,
all these counterexamples were pure `non-existence' results which provided no
constructive description of the sets of matrices for which the finiteness
conjecture fails. The first explicit counterexample to the finiteness
conjecture was built in~\cite{HMST:AdvMath11}, while the general methods of
constructing of such a type of counterexamples were presented recently
in~\cite{MorSid:JEMS13,JenPoll:ArXiv15}. The lower radius in a sense is more
complex object for analysis than the generalized spectral radius because it
generally depends on $\setA$ not
continuously~\cite{Jungers:LAA12,BochiMor:ArXiv13}. However, for the lower
spectral radius, disproof of the finiteness conjecture was found to be even
easier~\cite{BM:JAMS02,CJ:IJAMCS07} than for the generalized spectral radius.

Despite the fact that in general the finiteness conjecture is false, attempts
to discover new classes of matrices for which it still occurs continues.
However, it should be borne in mind that the validity of the finiteness
conjecture for some class of matrices provides only a theoretical possibility
to explicitly calculate the related spectral characteristics, because in
practice calculation of the spectral radii $\rho(A_{n}\cdots A_{1})$ for all
possible sets of matrices $A_{1},\ldots, A_{n} \in\setA$ may require too much
computing resources, even for relatively small values of $n$. Therefore, from
a practical point of view, the most interesting are the cases when the
finiteness conjecture is valid for small values of $n$.

The finiteness conjecture is obviously satisfied for the sets of commuting
matrices as well as for sets consisting of upper or lower triangular matrices
or of matrices that are isometries in some norm up to a scalar factor (that
is, $\|Ax\|\equiv\lambda_{A}\|x\|$ for some $\lambda_{A}$). It is less
obvious that the finiteness conjecture holds for the class of `symmetric'
bounded sets of matrices characterizing by the property that together with
each matrix the corresponding set contains also the (complex) conjugate
matrix~\cite[Proposition~18]{PW:LAA08}. In particular, this class includes
all the sets of self-adjoint matrices. One of the most interesting classes of
matrices for which the finiteness conjecture is valid, for both the
generalized and the lower spectral radius, is the so-called class of
non-negative matrices with independent row uncertainty~\cite{BN:SIAMJMAA09}.
Note that in all these cases, the generalized spectral radius coincides with
the spectral radius of a single matrix from $\setA$ or with the spectral
radius of the product of a pair of such matrices.

In~\cite{JB:LAA08} it was demonstrated that the finiteness conjecture is
valid for any pair of $2\times 2$ binary matrices, i.e. matrices with the
elements $\{0,1\}$, and in~\cite{CGSZ:LAA10} a similar result was proved for
any pair of $2\times 2$ sign-matrices, i.e. matrices with the elements $\{-
1,0,1\}$. Finally,
in~\cite{DHLX:LAA12,LiuXiao:LNCS12,JM:LAA13,Morris:ArXiv11,WangWen:CIS13} it
was shown that the finiteness conjecture holds for any bounded family of
matrices $\setA$, whose matrices, except perhaps one, have rank~$1$. There
are also a number of works with less constructive sufficient conditions for
attainability of the generalized spectral radius on a finite product of
matrices, see, e.g., the references in~\cite{Koz:IITP13}.

So, calculating the joint and lower spectral radii is a challenging problem,
and only for exceptional classes of matrices these characteristics may be
found explicitly and expressed by a `closed formula', see,
e.g.,~\cite{Jungers:09,Koz:IITP13} and the bibliography therein.


Outline the structure of the work. In this section, we have presented a brief
overview of the results related to the finiteness conjecture for the spectral
characteristics of matrices. In Section~\ref{S:IRU}, we remind the definition
of the sets of non-negative matrices with independent row uncertainty, and
then in Theorem~\ref{T:BNP} give a new proof of the related
Blondel-Nesterov-Protasov results on finiteness~\cite{NesPro:SIAMJMAA13}. A
key point of this proof is the so-called hourglass alternative,
Lemma~\ref{L:alternative}, that has been proposed in~\cite{ACDDHK15} to
analyze the minimax relations between the spectral radii of matrix products.
In Section~\ref{S:Hsets}, assertions of Lemma~\ref{L:alternative} are taken
as axioms for the definition of the sets of positive matrices, called
hourglass- or $\Hset$-sets, satisfying the hourglass alternative. In
Theorem~\ref{T:semiring} we show that the totality of all $\Hset$-sets of
matrices, supplemented by the zero and the identity matrices, forms a
semiring. This opens up the possibility of constructing new classes of
matrices for which analogues of Theorem~\ref{T:BNP} are true. The main result
of such a kind, Theorem~\ref{T:Hset}, is proved in Section~\ref{S:main}, and
in Corollary~\ref{C1} we show that all the assertions on finiteness of the
spectral characteristics remain valid for the sets of matrices, obtained as a
polynomial Minkowski combination of compact sets of non-negative matrices
with independent row uncertainty. In Section~\ref{S:cosets}, we present a
general fact about the relationship between the spectral characteristics of
sets of matrices and their convex hulls. Concluding remarks are given in
Section~\ref{S:conclude}.

\section{Sets of matrices with independent row uncertainty}\label{S:IRU}
As was noted in Section~\ref{S:intro}, one of the most interesting classes of
matrices for which the finiteness conjecture holds, both for the generalized
and lower spectral radius, is the so-called class of non-negative matrices
with independent row uncertainty~\cite{BN:SIAMJMAA09}. In this section, we
recall the relevant definition and present a new proof of the corresponding
results on finiteness needed to motivate further constructions.

Following~\cite{BN:SIAMJMAA09}, a set of matrices $\setA\subset\Mset(N,M)$ is
called an \emph{independent row uncertainty set} (\emph{IRU-set}) if it
consists of all the matrices
\[
A=\left(\begin{array}{cccc}
a_{11}&a_{12}&\cdots&a_{1M}\\
a_{21}&a_{22}&\cdots&a_{2M}\\
\cdots&\cdots&\cdots&\cdots\\
a_{N1}&a_{N2}&\cdots&a_{NM}
\end{array}\right),
\]
wherein each of the rows $a_{i} = (a_{i1}, a_{i2}, \ldots, a_{iM})$ belongs
to some set of $M$-rows $\setA_{i}$, $i=1,2,\ldots,N$. Clearly, any singleton
set of matrices is an IRU-set. An IRU-set of matrices will be called
\emph{positive} if so are all its matrices which is equivalent to positivity
of all the rows constituting the sets $\setA_{i}$.

If the set $\setA$ is compact, which is equivalent to compactness of each set
of rows $\setA_{1},\setA_{2},\ldots,\setA_{N}$, then the following quantities
are well defined:
\[
\rho_{min}(\setA) = \min_{A \in \setA} \rho(A), \quad
\rho_{max}(\setA) = \max_{A \in \setA} \rho(A).
\]
We will use the notation
\[
\hat{\rho}_{n}(\setA)= \sup_{A_{i}\in\setA} \rho(A_{n} \cdots A_{1})^{1/n},\quad
\check{\rho}_{n}(\setA)= \inf_{A_{i}\in\setA} \rho(A_{n} \cdots A_{1})^{1/n}.
\]

As shows the following theorem, the finiteness conjecture is valid for
compact IRU-sets of positive matrices.

\begin{theorem}\label{T:BNP}
Let $\setA$ be a compact IRU-set of positive matrices and $\tilde{\setA}$ be
a compact set of matrices satisfying the inclusions
$\setA\subseteq\tilde{\setA}\subseteq\co(\setA)$, where $\co(\setA)$ stands
for the convex hull of the set $\setA$. Then
\begin{enumerate}[\rm(i)]
\item $\check{\rho}_{n}(\tilde{\setA})=\rho_{min}(\setA)$ for all $n\ge 1$,
    and therefore
    $\check{\rho}(\tilde{\setA})=\rho_{min}(\tilde{\setA})=\rho_{min}(\setA)$;

\item $\hat{\rho}_{n}(\tilde{\setA})=\rho_{max}(\setA)$ for all $n\ge 1$,
    and therefore
    $\hat{\rho}(\tilde{\setA})=\rho_{max}(\tilde{\setA})=\rho_{max}(\setA)$.
\end{enumerate}
\end{theorem}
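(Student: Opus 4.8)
The plan is to reduce everything to two inequalities per spectral characteristic, and then to exploit the IRU-structure to produce, for the hard direction, a \emph{single} matrix (or, more generally, a finite-length periodic product) realizing the extremal spectral radius.

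First I would fix what needs to be shown. For part (i), since $\setA\subseteq\tilde\setA$, every product of factors from $\setA$ is a product of factors from $\tilde\setA$, so $\check\rho_n(\tilde\setA)\le\check\rho_n(\setA)\le\rho(A)^{1}=\rho_{min}(\setA)$ by taking a constant product of the spectral-radius-minimizing matrix. The real content is the reverse bound $\check\rho_n(\tilde\setA)\ge\rho_{min}(\setA)$, i.e. for \emph{every} choice $A_1,\dots,A_n\in\tilde\setA$ one has $\rho(A_n\cdots A_1)\ge\rho_{min}(\setA)^n$. Symmetrically, for part (ii) the easy bound is $\hat\rho_n(\tilde\setA)\ge\hat\rho_n(\setA)\ge\rho_{max}(\setA)$, and the work is in $\rho(A_n\cdots A_1)\le\rho_{max}(\setA)^n$ for all $A_i\in\tilde\setA$. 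So the whole theorem rests on the lower/upper bounds $\rho_{min}(\setA)^n\le\rho(A_n\cdots A_1)\le\rho_{max}(\setA)^n$ valid uniformly over $\tilde\setA$, and everything else (the passage to $\check\rho,\hat\rho$, and the equalities $\rho_{min}(\tilde\setA)=\rho_{min}(\setA)$, $\rho_{max}(\tilde\setA)=\rho_{max}(\setA)$) follows by taking infima/suprema over $n$ and $A_i$ and by the $n=1$ case.

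Next I would bring in the hourglass alternative, Lemma~\ref{L:alternative}. For a positive IRU-set $\setA$, this alternative should say — applied to the family of products of length $n$ — that for the product map over $\tilde\setA$ either the maximal spectral radius is attained on a single matrix (giving the clean upper bound via $\rho_{max}$) or else there is a joint Perron-type eigenvector phenomenon forcing a minimax identity; the point being that the two branches of the alternative, when combined across the IRU-rows, pin $\rho(A_n\cdots A_1)$ between $\rho_{min}(\setA)^n$ and $\rho_{max}(\setA)^n$. Concretely I would argue by induction on $n$: a product $A_n\cdots A_1$ of matrices in $\tilde\setA\subseteq\co(\setA)$ is again (by multilinearity of matrix multiplication in each factor, and since $\co$ is preserved) controlled in Perron eigenvalue by extreme products, i.e. by products of matrices in $\setA$; and for products of matrices in $\setA$ the IRU-structure lets one replace $A_n\cdots A_1$ by a \emph{constant} product of a single cleverly assembled matrix whose $i$-th row is drawn from $\setA_i$, using that the Perron eigenvector has positive entries and one may, row by row, substitute the row that maximizes (resp. minimizes) the relevant Rayleigh-type quotient. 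This substitution argument is exactly the mechanism the hourglass alternative axiomatizes, so I would invoke Lemma~\ref{L:alternative} to licence it rather than redo it by hand.

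The main obstacle I anticipate is the reduction from $\tilde\setA$ down to $\setA$: one must show that enlarging $\setA$ to anything inside its convex hull does not change $\rho_{min}$, $\rho_{max}$, or the finite-$n$ quantities. For $\hat\rho$ this is a convexity/extreme-point argument — the spectral radius of a product is a convex function of each factor separately when the others are fixed and nonnegativity is in force (via the Collatz–Wielandt formula), so the maximum over $\tilde\setA$ is attained at vertices, i.e. in $\setA$. For $\check\rho$ one needs the companion fact that the Perron root, as a function of a nonnegative matrix, cannot drop below $\rho_{min}(\setA)$ when one moves to a convex combination of $\setA$-matrices — again Collatz–Wielandt ($\rho(A)=\max_{x>0}\min_i (Ax)_i/x_i$) gives $\rho$ of a convex combination is at least the min of the $\rho$'s along a common test vector, but making this uniform over products of length $n$ is where the IRU hypothesis and the hourglass alternative do the real lifting. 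I would therefore structure the proof as: (1) state the two uniform bounds over $\tilde\setA$ as the crux; (2) prove the convex-hull reduction so it suffices to treat $\setA$ itself; (3) use Lemma~\ref{L:alternative} plus the row-substitution/induction to get the bounds for products over $\setA$; (4) pass to $\check\rho_n,\hat\rho_n$ and then to $\check\rho,\hat\rho$ by the $\inf_n/\sup_n$ formulas, reading off the stated equalities.
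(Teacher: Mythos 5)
Your reduction of the theorem to the two uniform bounds $\rho_{min}(\setA)^{n}\le\rho(A_{n}\cdots A_{1})\le\rho_{max}(\setA)^{n}$ over $\tilde{\setA}$ is exactly right, and so is the observation that the reverse inequalities are trivial via constant products. But the way you propose to establish the hard bounds contains a step that fails. You claim that the convex-hull reduction for $\hat\rho$ follows because ``the spectral radius of a product is a convex function of each factor separately \dots so the maximum over $\tilde{\setA}$ is attained at vertices.'' The Perron root is neither convex nor concave on non-negative matrices: the paper's own Example~1, with $A_{1}=\left(\begin{smallmatrix}0&2\\0&0\end{smallmatrix}\right)$, $A_{2}=\left(\begin{smallmatrix}0&0\\2&0\end{smallmatrix}\right)$, gives $\rho_{max}(\setA)=0$ but $\rho\bigl(\tfrac12(A_{1}+A_{2})\bigr)=1$, so the maximum over the convex hull is \emph{not} attained at vertices for general non-negative sets; the symmetric example shows the same failure for $\rho_{min}$. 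Neither branch of the Collatz--Wielandt formula rescues this (one is a max of concave functions of $A$, the other a min of convex ones). So the convex-hull reduction cannot be done first and independently of the IRU structure, and your step (2) as stated is false. Your step (3), the ``row-by-row substitution'' turning a length-$n$ product into a constant product, is also left at the level of a hope: the hourglass alternative as stated in Lemma~\ref{L:alternative} is a statement about a \emph{single} application of matrices to a vector, not about products, and you do not say what inequality the substitution preserves.

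The missing idea is that both difficulties are resolved simultaneously by producing one common test vector \emph{before} ever looking at products or at $\co(\setA)$. Take the matrix $\tilde{A}\in\setA$ minimizing (resp.\ maximizing) $\rho$ over $\setA$ and let $\tilde{v}>0$ be its Perron eigenvector, so $\tilde{A}\tilde{v}=\rho_{min}(\setA)\tilde{v}$. Apply the hourglass alternative with $u=\tilde{v}$, $v=\rho_{min}(\setA)\tilde{v}$: if some $A\in\setA$ violated $A\tilde{v}\ge\rho_{min}(\setA)\tilde{v}$, H1 would give $\bar{A}\in\setA$ with $\bar{A}\tilde{v}\le\rho_{min}(\setA)\tilde{v}$, $\bar{A}\tilde{v}\ne\rho_{min}(\setA)\tilde{v}$, whence $\rho(\bar{A})<\rho_{min}(\setA)$ by the strict Perron--Frobenius inequality (Lemma~\ref{L:1}(ii)), a contradiction. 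Hence $A\tilde{v}\ge\rho_{min}(\setA)\tilde{v}$ holds for \emph{all} $A\in\setA$; since this inequality is linear in $A$, it passes to all of $\co(\setA)\supseteq\tilde{\setA}$ with no convexity of $\rho$ needed, and it multiplies up immediately to $A_{n}\cdots A_{1}\tilde{v}\ge\rho_{min}(\setA)^{n}\tilde{v}$, giving $\rho(A_{n}\cdots A_{1})\ge\rho_{min}(\setA)^{n}$ by Lemma~\ref{L:1}(iii) --- no induction on $n$ and no row reassembly of products is required. The dual argument with H2 and Lemma~\ref{L:1}(iv) gives the upper bound. This common-eigenvector step (Lemma~\ref{L:mainmin}) is the actual content of the proof, and it is absent from your outline.
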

For the cases $\tilde{\setA}=\setA$ and $\tilde{\setA}=\co(\setA)$ this
theorem in a somewhat different formulation is proved
in~\cite{NesPro:SIAMJMAA13}. The next example demonstrates that none of the
equalities $\rho_{max}(\tilde{\setA})=\rho_{max}(\setA)$ and
$\rho_{min}(\tilde{\setA})=\rho_{min}(\setA)$ holds for arbitrary sets of
matrices.
\begin{example}\rm
Consider the sets of matrices $\setA=\{A_{1},A_{2}\}$ and
$\setB=\{B_{1},B_{2}\}$, where
\[
A_{1}=\left(\begin{array}{cc}
0&2\\0&0
\end{array}\right),\quad
A_{2}=\left(\begin{array}{cc}
0&0\\2&0
\end{array}\right),\quad
B_{1}=\left(\begin{array}{cc}
2&0\\0&0
\end{array}\right),\quad
B_{2}=\left(\begin{array}{cc}
0&0\\0&2
\end{array}\right).
\]
Then $\rho_{max}(\setA)<\rho_{max}(\co(\setA))$ and
$\rho_{min}(\setB)>\rho_{min}(\co(\setB))$ because
\begin{align*}
\rho_{max}(\setA)=\max_{A\in\setA}\rho(A)&=0,\quad \rho_{max}(\co(\setA))=\max_{A\in\co(\setA)}\rho(A)\ge
\rho\left(\tfrac{1}{2}(A_{1}+A_{2})\right)=1,\\
\rho_{min}(\setB)=\min_{B\in\setB}\rho(B)&=2,\quad \rho_{min}(\co(\setB))=\min_{B\in\co(\setB)}\rho(B)\le
\rho\left(\tfrac{1}{2}(B_{1}+B_{2})\right)=1.
\end{align*}
\end{example}

\begin{remark*}\rm
If an IRU-set $\setA$ is formed by a set of rows
$\setA_{1},\setA_{2},\ldots,\setA_{N}$, then its convex hull $\co(\setA)$ is
the IRU-set formed by the set of rows $\co(\setA_{1}), \co(\setA_{2}),
\ldots, \co(\setA_{N})$.
\end{remark*}

\subsection{Hourglass alternative}
To prove Theorem~\ref{T:BNP} we will need some definitions and a number of
supporting facts. For vectors $x,y\in\bbR^{N}$, we write $x \ge y$ or $x>y$,
if all coordinates of the vector $x$ are not less or strictly greater,
respectively, than the corresponding coordinates of the vector $y$. Similar
notation will be applied to matrices.

In the space $\bbR^{1}$ of real numbers any two elements $x$ and $y$ are
\emph{comparable}, i.e. either $x\le y$ or $y\le x$. In this case we say that
the space $\bbR^{1}$ is \emph{linearly ordered}. In the spaces $\bbR^{N}$
with $N>1$ the situation is quite different. Here there exist infinitely many
pairs of non-comparable elements, and the failure of the inequality $x\ge y$
does not imply the inverse inequality $x\le y$. The existence of
noncomparable elements leads to the fact that if, for some $x$, the system of
linear inequalities
\[
Ax\ge v,\qquad A\in\setA\subset\Mset(N,M)
\]
has no solution, then it does not mean that for some matrix $\bar{A}\in\setA$
the inverse inequality $\bar{A}x\le v$ will be valid. Examples of
corresponding sets of matrices $\setA$ can be easily constructed. However, as
the following lemma shows, for the sets of matrices with independent row
uncertainty all is not so bad, and for linear inequalities an analogue of the
linear ordering of solutions holds.

\begin{lemma}\label{L:alternative}
Let $\setA\subset\Mset(N,M)$ be an IRU-set of matrices and let $\tilde{A}u=v$
for some matrix $\tilde{A}\in\setA$ and vectors $u,v$. Then the following
properties hold:
\begin{enumerate}[\rm H1:]
\item either $Au\ge v$ for all $A\in\setA$ or there exists a matrix
    $\bar{A}\in\setA$ such that $\bar{A}u\le v$ and $\bar{A}u\neq v$;

\item either $Au\le v$ for all $A\in\setA$ or there exists a matrix
    $\bar{A}\in\setA$ such that  $\bar{A}u\ge v$ and $\bar{A}u\neq v$.
\end{enumerate}
\end{lemma}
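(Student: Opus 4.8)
The key observation is that for an IRU-set, each row of a matrix $A\in\setA$ can be chosen independently from the set $\setA_i$ of admissible $i$-th rows, so the inequality $Au\ge v$ (respectively $Au\le v$) decouples completely into $N$ scalar inequalities, one per row. The plan is to exploit this row-by-row separation together with the hypothesis that $\tilde Au=v$ for some $\tilde A\in\setA$, which guarantees that for \emph{each} index $i$ there is at least one row $\tilde a_i\in\setA_i$ achieving equality $\tilde a_i u=v_i$.

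For statement H1, I would argue by contradiction with its first alternative: suppose it is \emph{not} true that $Au\ge v$ for all $A\in\setA$. Since $\setA$ is an IRU-set, $Au\ge v$ fails for some $A$ exactly when there is some index $i_0$ and some row $a\in\setA_{i_0}$ with $a u< v_{i_0}$. Now build the matrix $\bar A$ as follows: in row $i_0$ put the row $a$ just found, so that $(\bar Au)_{i_0}=au<v_{i_0}$; in every other row $i\neq i_0$ put the row $\tilde a_i\in\setA_i$ coming from $\tilde A$, which satisfies $(\bar Au)_i=\tilde a_i u=v_i$. Because $\setA$ has independent row uncertainty, this mixed choice of rows again yields a matrix $\bar A\in\setA$. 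By construction $\bar Au\le v$ coordinatewise and $(\bar Au)_{i_0}\neq v_{i_0}$, hence $\bar Au\neq v$, which is exactly the second alternative of H1. Statement H2 is proved by the mirror-image argument, exchanging the roles of $\ge$ and $\le$ (replace "$au<v_{i_0}$" by "$au>v_{i_0}$").

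The only point that needs care — and the main (though modest) obstacle — is verifying that the matrix $\bar A$ assembled from a "bad" row in position $i_0$ and the equality-rows $\tilde a_i$ elsewhere genuinely lies in $\setA$; this is precisely where the IRU structure is indispensable, and it is worth stating explicitly that an arbitrary set of matrices would not permit such free recombination of rows (as the paragraph preceding the lemma already warns). Everything else is bookkeeping: the negation of "$Au\ge v$ for all $A$" must be translated into a statement about a single offending row, which uses once more that membership $A\in\setA$ is equivalent to $a_i\in\setA_i$ for every $i$ independently. No continuity, compactness, or positivity of $\setA$ is needed for this lemma — only the row-independence and the existence of the equality witness $\tilde A$.
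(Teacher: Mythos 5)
Your proposal is correct and is essentially the paper's own proof: negate the first alternative to find a single offending row $a\in\setA_{i_0}$ with $au<v_{i_0}$, splice it into $\tilde A$ using the independent-row structure to get $\bar A\in\setA$ with $\bar Au\le v$ and $\bar Au\neq v$, and argue symmetrically for H2. Your remark that positivity is not needed here is also consistent with the paper, whose proof states $u,v>0$ but never uses it.
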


Assertions H1 and H2 have a simple geometrical interpretation. Imagine that
the sets $B_{l}=\{x:x\le v\}$ and $B_{u}=\{x:x\ge v\}$ form the lower and
upper bulbs of an hourglass with the neck at the point $v$. Then
Lemma~\ref{L:alternative} asserts that either all the grains $Au$ fill one of
the bulbs (upper or lower), or  there remains at least one grain in the other
bulb (lower or upper, respectively). Such an interpretation gives reason to
call Lemma~\ref{L:alternative} the \emph{hourglass alternative}. This
alternative will play a key role in the proof of Theorem~\ref{T:BNP} as well
as in its extension to a new class of matrices. The hourglass alternative has
been proposed by the author in~\cite{ACDDHK15} to analyze the minimax
relations between the spectral radii of matrix products.

\begin{proof}[Proof of Lemma~\ref{L:alternative}]
Given an IRU-set of matrices $\setA\subset\Mset(N,M)$, let $\tilde{A}u=v$ for
some matrix $\tilde{A}\in\setA$ and vectors $u,v>0$.

We first prove assertion H1. If the inequality $Au\ge v$ holds for all
$A\in\setA$ then there is nothing to prove. So let us suppose that for some
matrix $A=(a_{ij})\in\setA$ the inequality $Au\ge v$ is not satisfied. Then
representing the vectors $u$ and $v$ in the coordinate form
\[
u=(u_{1},u_{2},\ldots,u_{M})^{\transpose},\quad
v=(v_{1},v_{2},\ldots,v_{N})^{\transpose},
\]
we obtain that
\[
a_{i1}u_{1}+a_{i2}u_{2}+\cdots+a_{iM}u_{M}<v_{i}
\]
for some $i\in\{1,2,\ldots,N\}$; without loss of generality we can assume
that $i=1$. In this case, for the matrix
\[
\bar{A}=\left(\begin{array}{cccccc}
a_{11}&a_{12}&\cdots&a_{1M}\\
\tilde{a}_{21}&\tilde{a}_{22}&\cdots&\tilde{a}_{2M}\\
\cdots&\cdots&\cdots&\cdots\\
\tilde{a}_{N1}&\tilde{a}_{N2}&\cdots&\tilde{a}_{NM}
\end{array}\right),
\]
which is obtained from the matrix $\tilde{A}=(\tilde{a}_{ij})$ by changing
the first row to the row
\[
a_{1}=(a_{11},a_{12},\ldots,a_{1M})
\]
and therefore also belongs to $\setA$, we have the inequalities
\begin{align*}
a_{11}u_{1}+a_{12}u_{2}+\cdots+a_{1M}u_{M}&<v_{1}\\
\shortintertext{and}
\tilde{a}_{i1}u_{1}+\tilde{a}_{i2}u_{2}+\cdots+\tilde{a}_{iM}u_{M}&=v_{i},\qquad i=2,3,\ldots,N.
\end{align*}
Consequently, $\bar{A}u\le v$ and $\bar{A}u\neq v$, which completes the proof
of assertion~H1.

Assertion~H2 is proved similarly.
\end{proof}

We now show how Lemma~\ref{L:alternative} can be used to analyze the spectral
characteristics of sets of matrices. The \emph{spectral radius} of an
$(N\times N)$-matrix $A$ is defined as the maximal modulus of its eigenvalues
and denoted by $\rho(A)$. The spectral radius depends continuously on the
matrix, and in the case $A>0$ by the Perron-Frobenius
theorem~\cite[Theorem~8.2.2]{HJ2:e} the number $\rho(A)$ is a simple
eigenvalue of the matrix $A$ whereas all the other eigenvalues of $A$ are
strictly less than $\rho(A)$ by modulus. The eigenvector
$v=(v_{1},v_{2},\ldots,v_{N})^{\transpose}$ corresponding to the eigenvalue
$\rho(A)$ (normalized, for example, by the equality
$v_{1}+v_{2}+\cdots+v_{N}=1$) is uniquely determined and positive.

In the following lemma we consolidate some facts of the theory of positive
matrices, which in general are well known, but references to which are
spreaded among various publications.

\begin{lemma}\label{L:1}
Let $A$ be a non-negative $(N\times N)$-matrix, then the following assertions
hold:
\begin{enumerate}[\rm(i)]
\item if $Au\le\lambda u$ for some vector $u>0$, then $\lambda\ge0$ and
    $\rho(A)\le\lambda$;
\item moreover, if in conditions of {\rm(i)} $A>0$ and $Au\neq\lambda u$,
    then $\rho(A)<\lambda$;
\item if $Au\ge\lambda u$ for some non-zero vector $u\ge0$ and some number
    $\lambda\ge0$, then $\rho(A) \ge\lambda$;
\item moreover, if in conditions of {\rm(iii)} $A>0$ and $Au\neq\lambda u$,
    then $\rho(A)> \lambda$.
\end{enumerate}
\end{lemma}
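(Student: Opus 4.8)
The plan is to obtain all four items from two elementary ingredients: the monotonicity of a non-negative matrix ($A\ge 0$ implies $x\ge y\Rightarrow Ax\ge Ay$), iterated by induction, and the characterization $\rho(A)=\lim_{n\to\infty}\|A^{n}\|^{1/n}$ together with $\rho(A)\le\|A\|$ for an induced norm chosen to fit the data. For (i)--(ii) the convenient norm is the weighted sup-norm $\|x\|_{u}=\max_{i}|x_{i}|/u_{i}$ attached to the given $u>0$; for (iii)--(iv) the convenient norm is the ordinary $\ell^{1}$-norm. In each pair, the ``moreover'' statement is reduced to the preceding one by applying the strictly positive matrix $A$ one extra time.

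For (i): from $A\ge 0$ and $u>0$ we have $0\le Au\le\lambda u$, and positivity of $u$ forces $\lambda\ge 0$. A direct computation with $a_{ij}\ge 0$ shows that the operator norm induced by $\|\cdot\|_{u}$ equals $\|A\|_{u}=\max_{i}(Au)_{i}/u_{i}$ (the supremum being attained at $x=u$), whence $\rho(A)\le\|A\|_{u}\le\lambda$. For (ii), adding $A>0$ and $Au\neq\lambda u$ means $(Au)_{k}<\lambda u_{k}$ for some $k$; applying $A$ again and using $a_{ik}>0$ upgrades this to the strict componentwise bound $(A^{2}u)_{i}<\lambda(Au)_{i}\le\lambda^{2}u_{i}$ for every $i$, so $\|A^{2}\|_{u}<\lambda^{2}$ and $\rho(A)^{2}=\rho(A^{2})<\lambda^{2}$; since $A>0$ and $u>0$ rule out $\lambda=0$, this gives $\rho(A)<\lambda$.

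For (iii): induction on $Au\ge\lambda u$ using monotonicity yields $A^{n}u\ge\lambda^{n}u\ge 0$, hence $\|A^{n}\|_{1}\ge\|A^{n}u\|_{1}/\|u\|_{1}\ge\lambda^{n}$ (note $\|u\|_{1}>0$ as $u\ge 0$ is non-zero), and passing to the limit gives $\rho(A)\ge\lambda$. For (iv), with $A>0$ and $Au\neq\lambda u$, the vector $w:=Au$ is strictly positive, and applying $A$ once more turns $Au\ge\lambda u$ --- strict in some coordinate $j$, with $a_{ij}>0$ --- into $Aw>\lambda w$ componentwise; then $\mu:=\min_{i}(Aw)_{i}/w_{i}>\lambda$ satisfies $Aw\ge\mu w$, and (iii) applied to $w$ and $\mu$ yields $\rho(A)\ge\mu>\lambda$.

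The step I would watch most carefully is the conversion of a non-strict inequality carrying an equality defect into a genuinely strict componentwise inequality in (ii) and (iv): this is precisely where $A>0$ is essential, as a single coordinate of failure is spread to all coordinates by one further multiplication by $A$. The remaining pieces --- the weighted-norm identity, the inductive monotonicity, and the spectral-radius limit --- are routine, so I do not expect any real difficulty there.
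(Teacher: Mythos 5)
Your proof is correct and follows essentially the same strategy as the paper's: the lower bound in (iii) comes from $A^nu\ge\lambda^n u$ plus Gelfand's formula, and the strict versions (ii), (iv) are obtained by applying the positive matrix $A$ once more to spread a single-coordinate defect into a uniform strict gap. The only cosmetic differences are that you derive the bound $\rho(A)\le\lambda$ in (i) explicitly via the weighted sup-norm $\|\cdot\|_u$ where the paper cites a standard corollary of Horn--Johnson, and in (iv) you apply (iii) to $A$ with the vector $w=Au$ and a boosted $\mu>\lambda$ rather than to $A^2$ with $\lambda^2+\varepsilon$; both variants are sound.
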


\begin{proof}
As stated in~\cite[Corollary~8.1.29]{HJ2:e}, for any nonnegative matrix $A$
and numbers $\alpha,\beta\ge0$, the inequalities
\begin{equation}\label{E:horn29}
\alpha\le \rho(A)\le \beta
\end{equation}
are valid provided that $\alpha u\le Au\le \beta u$ for some $u>0$, from
which assertion (i) immediately follows. Let us prove three remaining
assertions.

(ii) Let  $Au\le\lambda u$ for $u>0$, where $A>0$ and $Au\neq\lambda u$. Then
at least one coordinate of the vector $Au-\lambda u\le 0$ is strictly
negative. Therefore, the condition $A>0$ implies strict negativity of all the
coordinates of the vector $A(Au-\lambda u)$. Then there exists
$\varepsilon>0$ such that $A(Au-\lambda u)\le -\varepsilon u$ and therefore
$A^{2}u=A(Au-\lambda u)+\lambda Au\le (\lambda^{2}-\varepsilon)u$. Then, by
\eqref{E:horn29}, we get $\rho(A^{2})\le\lambda^{2}-\varepsilon$, and thus
$\rho(A)\le\sqrt{\lambda^{2}-\varepsilon}<\lambda$.

(iii) The condition $Au\ge\lambda u$ with a non-zero $u\ge0$ implies
$A^{n}u\ge \lambda^{n}u$ for any $n\ge1$. Then
$\|A^{n}\|\cdot\|u\|\ge\|A^{n}u\|\ge\lambda^{n}\|u\|$, where $\|\cdot\|$ is
any norm monotone with respect to coordinates of a non-negative vector, e.g.
the Euclidean norm or the max-norm. Therefore, $\|A^{n}\|\ge \lambda^{n}$,
and by Gelfand's formula~\cite[Corollary~5.6.14]{HJ2:e}
$\rho(A)=\lim_{n\to\infty}\|A^n\|^{1/n}\ge\lambda$.

(iv) Now let $A>0$ and $Au\neq\lambda u$. Then at least one coordinate of the
vector $Au-\lambda u\ge 0$ is strictly positive. Therefore, the condition
$A>0$ implies strict positivity of all coordinates of the vector
$A(Au-\lambda u)$. Then there exists $\varepsilon>0$ such that $A(Au-\lambda
u)\ge \varepsilon u$ and therefore $A^{2}u=A(Au-\lambda u)+\lambda Au\ge
(\lambda^{2}+\varepsilon)u$. This, by assertion (iii) applied to the matrix
$A^{2}$, implies $\rho(A^{2})\ge\lambda^{2}+\varepsilon$, and thus
$\rho(A)\ge\sqrt{\lambda^{2}+\varepsilon}>\lambda$.

The lemma is proved.
\end{proof}

The proofs of Lemmas~\ref{L:alternative} and \ref{L:1} are borrowed
from~\cite{ACDDHK15} and presented here only for the sake of completeness of
presentation. Lemma~\ref{L:1} resembles Lemma~1
from~\cite{NesPro:SIAMJMAA13}. The next lemma shows that for the IRU-sets of
positive matrices there are valid assertions in a certain sense inverse to
Lemma~\ref{L:1}.
\begin{lemma}\label{L:mainmin}
Let $\setA\subset\Mset(N,N)$ be a compact IRU-set of positive matrices, then
the following assertions hold:
\begin{enumerate}[\rm(i)]
\item if $\tilde{A}\in\setA$ is a matrix satisfying $\rho(\tilde{A}) =
    \rho_{min}(\setA)$ and $\tilde{v}$ is its positive eigenvector
    corresponding to the eigenvalue $\rho(\tilde{A})$, then $A\tilde{v}\ge
    \rho_{min}(\setA)\tilde{v}$ for all $A\in\setA$;

\item if $\tilde{A}\in\setA$ is a matrix satisfying $\rho(\tilde{A}) =
    \rho_{max}(\setA)$ and $\tilde{v}$ is its positive eigenvector
    corresponding to the eigenvalue $\rho(\tilde{A})$, then $A\tilde{v}\le
    \rho_{max}(\setA)\tilde{v}$ for all $A\in\setA$.
\end{enumerate}
\end{lemma}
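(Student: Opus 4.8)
The plan is to prove assertion (i); assertion (ii) is entirely symmetric with the inequalities reversed, so only a remark will be needed for it. The starting point is the hourglass alternative, Lemma~\ref{L:alternative}, applied with $u=v=\tilde v$, $\tilde A$ the minimizing matrix and $\rho(\tilde A)=\rho_{min}(\setA)$. Since $\tilde A\tilde v=\rho_{min}(\setA)\tilde v$, we may rescale and apply H1 to the IRU-set $\setA':=\rho_{min}(\setA)^{-1}\setA$ (or, more directly, use H1 in the form: either $A\tilde v\ge\rho_{min}(\setA)\tilde v$ for every $A\in\setA$, or there is a $\bar A\in\setA$ with $\bar A\tilde v\le\rho_{min}(\setA)\tilde v$ and $\bar A\tilde v\neq\rho_{min}(\setA)\tilde v$). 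The first alternative is exactly the conclusion we want, so the whole argument reduces to ruling out the second alternative.

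The key step is therefore the following contradiction. Suppose $\bar A\in\setA$ satisfies $\bar A\tilde v\le\rho_{min}(\setA)\tilde v$ with $\bar A\tilde v\neq\rho_{min}(\setA)\tilde v$. Because $\setA$ is an IRU-set of \emph{positive} matrices, $\bar A>0$, so Lemma~\ref{L:1}(ii) (with $A=\bar A$, $u=\tilde v$, $\lambda=\rho_{min}(\setA)$) yields $\rho(\bar A)<\rho_{min}(\setA)$. But $\bar A\in\setA$ and by definition $\rho_{min}(\setA)=\min_{A\in\setA}\rho(A)\le\rho(\bar A)$, a contradiction. Hence the second alternative is impossible and $A\tilde v\ge\rho_{min}(\setA)\tilde v$ holds for all $A\in\setA$, proving~(i).

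For assertion (ii) I would run the mirror-image argument: apply H2 of Lemma~\ref{L:alternative} with $u=v=\tilde v$ and $\rho(\tilde A)=\rho_{max}(\setA)$; either $A\tilde v\le\rho_{max}(\setA)\tilde v$ for all $A\in\setA$, which is the desired conclusion, or some $\bar A\in\setA$ has $\bar A\tilde v\ge\rho_{max}(\setA)\tilde v$ with $\bar A\tilde v\neq\rho_{max}(\setA)\tilde v$, in which case Lemma~\ref{L:1}(iv) gives $\rho(\bar A)>\rho_{max}(\setA)$, contradicting the maximality defining $\rho_{max}(\setA)$. The only mild subtlety worth stating explicitly is why the hypotheses of Lemma~\ref{L:alternative} and Lemma~\ref{L:1} are met: compactness of $\setA$ guarantees that a minimizing (resp.\ maximizing) matrix $\tilde A$ exists and, by Perron--Frobenius for positive matrices (as recalled before Lemma~\ref{L:1}), that its Perron eigenvector $\tilde v$ is positive and unique up to scaling, so $u=v=\tilde v>0$ as required. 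I do not anticipate a genuine obstacle here; the proof is essentially a two-line combination of the hourglass alternative with parts (ii) and (iv) of Lemma~\ref{L:1}, and the main point of presenting it is to make visible how these two ingredients interlock.
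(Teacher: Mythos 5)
Your argument is correct and is essentially the paper's own proof: apply H1 (resp.\ H2) of Lemma~\ref{L:alternative} with $u=\tilde v$, $v=\rho_{min}(\setA)\tilde v$ (resp.\ $\rho_{max}(\setA)\tilde v$), and rule out the second alternative via Lemma~\ref{L:1}(ii) (resp.\ (iv)) because it would contradict the minimality (resp.\ maximality) defining $\rho_{min}$ (resp.\ $\rho_{max}$). The only cosmetic slip is the phrase ``$u=v=\tilde v$''; the correct instantiation is $v=\rho(\tilde A)\tilde v$, which you in fact use in the very next clause, so nothing is affected.
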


\begin{proof}
To prove assertion (i) let us note that
$\tilde{A}\tilde{v}=\rho_{min}(\setA)\tilde{v}$. Then by assertion (i) of
Lemma~\ref{L:alternative} either $A\tilde{v}\ge\rho_{min}(\setA)\tilde{v}$
for all $A\in\setA$ or there exists a matrix $\bar{A}\in\setA$ such that
$\bar{A}\tilde{v}\le \rho_{min}(\setA)\tilde{v}$ and $\bar{A}\tilde{v}\neq
\rho_{min}(\setA)\tilde{v}$. In the latter case, by Lemma~\ref{L:1} there
would be valid the inequality $\rho(\bar{A})<\rho_{min}(\setA)$ which
contradicts to the definition of $\rho_{min}(\setA)$. Hence, the inequality
$A\tilde{v}\ge\rho_{min}(\setA)\tilde{v}$ holds for all $A\in\setA$.

Assertion (ii) is proved similarly.
\end{proof}

Now all is ready to prove Theorem~\ref{T:BNP}.

\subsection{Proof of Theorem~\ref{T:BNP}}
To prove assertion (i) choose a matrix $\tilde{A}\in\setA$ for which
$\rho(\tilde{A})=\rho_{min}(\setA)$ and denote by
$\tilde{v}=(\tilde{v}_{1},\tilde{v}_{2},\ldots,\tilde{v}_{N})^\transpose$ a
positive eigenvector of $\tilde{A}$ corresponding to the eigenvalue
$\rho(\tilde{A})$. Then by assertion (i) of Lemma~\ref{L:mainmin}
$A\tilde{v}\ge\rho_{min}(\setA)\tilde{v}$ for all $A\in\co(\setA)$ and
therefore for all $A\in\tilde{\setA}$. Hence $A_{i_{n}}\cdots
A_{i_{1}}\tilde{v}\ge\rho_{min}^{n}(\setA)\tilde{v}$ for all
$A_{i_{j}}\in\tilde{\setA}$. Consequently, by Lemma~\ref{L:1}
$\rho(A_{i_{n}}\cdots A_{i_{1}})\ge\rho_{min}^{n}(\setA)$ for all
$A_{i_{j}}\in\tilde{\setA}$ and therefore
\[
\check{\rho}_{n}(\tilde{\setA})\ge\rho_{min}(\setA).
\]

On the other hand, since $\tilde{A}\in\setA\subseteq\tilde{\setA}$ then
clearly
\[
\check{\rho}_{n}(\tilde{\setA})\le\rho(\tilde{A}^{n})^{1/n}=\rho_{min}(\setA),
\]
which implies $\check{\rho}_{n}(\tilde{\setA})=\rho_{min}(\setA)$ for all
$n\ge 1$. Observing now that
$\check{\rho}_{1}(\tilde{\setA})=\rho_{min}(\tilde{\setA})$ we complete the
proof of assertion (i).

The proof of assertion (ii) is carried out by verbatim repetition of the
proof of assertion (i) by taking instead of $\tilde{A}$ a matrix maximizing
the spectral radii of matrices from $\setA$ and instead of the estimates for
$\rho_{min}(\setA)$ the corresponding estimates for $\rho_{max}(\setA)$, and
then using assertion (ii) of Lemma~\ref{L:mainmin} instead of assertion (i).

\section{$\Hset$-sets of matrices}\label{S:Hsets}
Apart from general properties of positive matrices given in Lemma~\ref{L:1},
the proof of Theorem~\ref{T:BNP} relies only on those properties of IRU-sets
of matrices which were formulated in Lemma~\ref{L:alternative} as statements
H1 and H2 of the hourglass alternative. Therefore, it is natural to
axiomatize the class of matrices satisfying the statements H1 and H2, and to
study its properties.

\subsection{Main definitions}

A set of positive matrices $\setA\subset\Mset(N,M)$ will be called
\emph{$\Hset$-set} or \emph{hourglass set} if every time the equality
$\tilde{A}u=v$ is true for some matrix $\tilde{A}\in\setA$ and vectors
$u,v>0$ there are also true assertions H1 and H2 of
Lemma~\ref{L:alternative}.

A trivial example of $\Hset$-sets are \emph{linearly ordered} sets
$\setA=\{A_{1}$, $A_{2}$, \ldots, $A_{n}\}$ composed of positive matrices
$A_{i}$ satisfying the inequalities $0<A_{1}<A_{2}<\cdots<A_{n}$. In this
case, for each $u>0$, the vectors $A_{1}u,A_{2}u,\ldots,A_{n}u$ are strictly
positive and linearly ordered, which yields the validity of assertions H1 and
H2 for $\setA$. A less trivial and more interesting example of $\Hset$-sets,
as follows from Lemma~\ref{L:alternative}, is the class of sets of positive
matrices with independent row uncertainty.

Not every set of positive matrices is an $\Hset$-set. A relevant example
could easily be built for the set $\setA=\{A,B\}$ consisting of two $(2\times
2)$-matrices. In this case, for $\setA$ was $\Hset$-set, it is necessary that
the vectors $Au$ and $Bu$ were comparable for any vector $u>0$, that is,
either $Au\le Bu$ or $Bu\le Au$. But this is not fulfilled, for example, in
the case when $AB=P$, where $P$ is any projection on the linear hull of the
vector $(-1,1)$.

Let us describe some general properties of the class of $\Hset$-sets of
matrices. Introduce the operations of Minkowski addition and multiplication
for sets of matrices:
\[
\setA+\setB=\{A+B:A\in\setA,~ B\in\setB\},\quad
\setA\setB=\{AB:A\in\setA,~ B\in\setB\},
\]
and also the operation of multiplication of a set of matrices by a scalar:
\[
t\setA=\setA t=\{tA:t\in\bbR,~A\in\setA\}.
\]
Naturally, the operation of addition is \emph{admissible} if and only if the
matrices from the sets $\setA$ and $\setB$ are of the same size, while the
operation of multiplication is \emph{admissible} if and only if the sizes of
the matrices from sets $\setA$ and $\setB$ are matched: dimension of the rows
of the matrices from $\setA$ is the same as dimension of the columns of the
matrices from $\setB$. Problems with matching of sizes do not arise when one
considers sets of square matrices of the same size.

In what follows, we will need to make various kinds of limiting transitions
with the matrices from the sets under consideration as well as with the sets
of matrices themselves. In this connection, it is natural to restrict our
considerations to only compact (closed and bounded) sets of matrices. The
totality of all compact sets of positive $(N\times M)$-matrices with
independent row uncertainty will be denoted by $\IRU(N,M)$. The totality of
all finite\footnote{We will not consider infinite sets since this article is
not a proper place to get into the intricacies of determining the linear
ordering for infinite sets.} positive linearly ordered sets of $(N\times
M)$-matrices will be denoted by $\Lin(N,M)$. At last, by $\Hset(N,M)$ we
denote the set of all compact $\Hset$-sets of positive $(N\times
M)$-matrices.

\begin{theorem}\label{T:semiring} The following is true:
\begin{enumerate}[\rm(i)]
  \item $\setA+\setB\in\Hset(N,M)$ if $\setA,\setB\in\Hset(N,M)$;
  \item $\setA\setB\in\Hset(N,Q)$ if $\setA\in\Hset(N,M)$ and
      $\setB\in\Hset(M,Q)$;
  \item $t\setA=\setA t\in\Hset(N,M)$ if $t>0$ and $\setA\in\Hset(N,M)$.
\end{enumerate}
\end{theorem}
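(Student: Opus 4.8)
The plan is to verify the defining property of an $\Hset$-set directly for each of the three constructions, reducing everything to the hourglass alternative for the building blocks $\setA$ and $\setB$. Throughout, fix positive vectors $u,v$ and a matrix in the relevant Minkowski set that maps $u$ to $v$; I must produce, for each of H1 and H2, either the required global inequality or a single matrix in the set giving the reverse strict-in-one-coordinate inequality. Assertion (iii) is essentially immediate: if $\tilde A u = v$ with $\tilde A = t A_0$, $A_0\in\setA$, $t>0$, then $A_0 u = t^{-1}v =: v'$, and applying the $\Hset$-property of $\setA$ at $(u,v')$ and multiplying the resulting inequalities by $t>0$ (which preserves $\ge$, $\le$ and the "not equal" clause, since scaling a nonzero nonnegative vector by a positive scalar keeps it nonzero nonnegative) yields H1 and H2 for $t\setA$ at $(u,v)$. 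One also checks $t\setA$ consists of positive matrices and is compact, which is routine.

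For assertion (ii), the multiplicative case, suppose $\tilde C u = v$ where $\tilde C = \tilde A\tilde B$ with $\tilde A\in\setA$, $\tilde B\in\setB$. Set $w = \tilde B u$; since $\setB$ consists of positive matrices and $u>0$, we have $w>0$, and $\tilde A w = v$. Now apply the $\Hset$-property of $\setB$ at $(u,w)$ and of $\setA$ at $(w,v)$. For H1 for $\setA\setB$: if $Bu\ge w$ for all $B\in\setB$ \emph{and} $Aw\ge v$ for all $A\in\setA$, then for any $A\in\setA$, $B\in\setB$ we get $ABu\ge Aw$ — here I use that $A>0$ (monotonicity of $A$ on the cone) — and $Aw\ge v$, hence $ABu\ge v$; so the global alternative holds. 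Otherwise at least one of the two "there exists" branches fires. If $\setB$'s branch fires, there is $\bar B$ with $\bar Bu\le w$, $\bar Bu\neq w$; then for any fixed $A_0>0$ in $\setA$, $A_0\bar Bu\le A_0 w$ strictly in every coordinate (since $A_0>0$ applied to a nonzero nonpositive vector $\bar Bu - w$ gives a strictly negative vector), and $A_0 w = v$ only if we picked $A_0 = \tilde A$ — in fact take $A_0 = \tilde A$, so $\tilde A\bar Bu < \tilde Aw = v$, giving $\bar C := \tilde A\bar B\in\setA\setB$ with $\bar Cu\le v$, $\bar Cu\neq v$. If instead $\setA$'s branch fires, there is $\bar A$ with $\bar Aw\le v$, $\bar Aw\neq v$; take $\bar C = \bar A\tilde B$, so $\bar Cu = \bar A\tilde Bu = \bar Aw\le v$ and $\neq v$. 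Either way H1 holds for $\setA\setB$; H2 is symmetric (swap $\le$ and $\ge$, and positivity of $A_0$ maps a nonzero nonnegative vector to a strictly positive one). Compactness and positivity of $\setA\setB$ follow from continuity of matrix multiplication and positivity of the factors.

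For assertion (i), the additive case, suppose $\tilde D u = v$ with $\tilde D = \tilde A + \tilde B$, $\tilde A\in\setA$, $\tilde B\in\setB$; put $v_1 = \tilde A u$, $v_2 = \tilde B u$, so $v_1, v_2 > 0$ and $v_1 + v_2 = v$. Apply the $\Hset$-property of $\setA$ at $(u,v_1)$ and of $\setB$ at $(u,v_2)$. For H1: if $Au\ge v_1$ for all $A\in\setA$ and $Bu\ge v_2$ for all $B\in\setB$, then $(A+B)u\ge v_1+v_2 = v$ for all $A,B$, the global case. Otherwise, say $\setA$'s branch fires: there is $\bar A$ with $\bar Au\le v_1$, $\bar Au\neq v_1$; then $\bar D := \bar A + \tilde B$ satisfies $\bar Du = \bar Au + v_2 \le v_1 + v_2 = v$ and $\bar Du\neq v$ (the coordinate where $\bar Au < v_1$ is still strict after adding the fixed $v_2$). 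Symmetrically if $\setB$'s branch fires. H2 is again the mirror image. Positivity of $\setA+\setB$ is clear (sum of positive matrices), and compactness follows since the Minkowski sum of two compact sets is compact.

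The main obstacle, and the place requiring a little care, is the multiplicative case H1/H2 when the \emph{inner} set's exceptional branch fires: one must then re-inflate a non-strict vector inequality $\bar Bu\le w$ (equal in some coordinates) into a genuinely strict-everywhere defect after hitting it with a matrix of $\setA$, and then match it back to $v$ by choosing that matrix to be precisely $\tilde A$. This is exactly the role played by strict positivity of the matrices (the standing $\Hset$-set hypothesis) — it is what turns "$\le$ and $\neq$" into "$<$ in every coordinate" upon one application — so the hypothesis that $\Hset$-sets consist of \emph{positive} matrices, not merely nonnegative ones, is used essentially here. Everything else is bookkeeping with the cone order and the elementary fact that compactness and positivity are preserved by the three operations.
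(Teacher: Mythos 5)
Your proposal is correct and follows essentially the same route as the paper: decompose the witness matrix as $\tilde A+\tilde B$ (resp.\ $\tilde A\tilde B$), apply the hourglass alternative to each factor, and in the exceptional branch freeze the other factor at its tilde version, using strict positivity to propagate the ``$\le$ and $\neq$'' defect through multiplication. The only cosmetic differences are that you spell out both subcases of (ii) and the scaling argument for (iii), which the paper dispatches with ``to be specific'' and ``evident.''
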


\begin{proof}
First prove (i). Show the validity of assertion H1 for the sum $\setA+\setB$.
Let, for some matrix $C\in\setA+\setB$ and vectors $u,v>0$, the equality
$Cu=v$ holds. Then, by definition of the set $\setA+\setB$, there exist
matrices $\tilde{A}\in\setA$ and $\tilde{B}\in\setB$ such that
$C=\tilde{A}+\tilde{B}$, and hence $(\tilde{A}+\tilde{B})u=v$. Denote
$\tilde{A}u=v_{1}$ and $\tilde{B}u=v_{2}$ then $v_{1}+v_{2}=v$, where
$v_{1},v_{2}>0$ due to the positivity of the matrices $\tilde{A}$ and
$\tilde{B}$. If
\begin{equation}\label{E:ABgeforall}
Au\ge v_{1},~ Bu\ge v_{2}\quad\text{for all}\quad  A\in\setA,~ B\in\setB,
\end{equation}
then, for all $A+B\in\setA+\setB$, there will be valid also the inequality
$(A+B)u\ge v_{1}+v_{2}=v$. Thus, in this case assertion H1 is proven.

Now, let \eqref{E:ABgeforall} fail, and let, to be specific, the inequality
$Au\ge v_{1}$ be not valid for at least one matrix $A\in\setA$. Then, since
$\setA\in\Hset(N,M)$, assertion H1 for the set of matrices $\setA$ implies
the existence of a matrix $\bar{A}\in\setA$ such that $\bar{A}u\le v_{1}$ and
$\bar{A}u\neq v_{1}$. In this case the matrix
$\bar{A}+\tilde{B}\in\setA+\setB$ will satisfy the inequality
$(\bar{A}+\tilde{B})u\le v_{1}+v_{2}=v$, and moreover,
$(\bar{A}+\tilde{B})u\neq v$ since $\tilde{B}u= v_{2}$ while $\bar{A}u\neq
v_{1}$. Thus, assertion H1 for the set $\setA+\setB$ is also valid in the
case when \eqref{E:ABgeforall} fails.

The proof of assertion H2 for the set $\setA+\setB$ is similar. Compactness
of the set $\setA+\setB$ in the case when the sets $\setA$ and $\setB$ are
compact is evident.

We now prove (ii). Show the validity of assertion H1 for the product
$\setA\setB$. Suppose that $Cu=v$ for some matrix $C\in\setA\setB$ and
vectors $u,v>0$. Then, by definition of the set $\setA\setB$, there exist
matrices $\tilde{A}\in\setA$ and $\tilde{B}\in\setB$ such that
$\tilde{A}\tilde{B}u=v$. By denoting $\tilde{B}u=w$ we obtain, due to the
positivity of the matrix $\tilde{B}$ and the vector $u$, that $w>0$ and
$\tilde {A}w=u$. If
\begin{equation}\label{E:ABgeforall1}
Aw\ge v,~ Bu\ge w\quad\text{for all}\quad  A\in\setA,~ B\in\setB,
\end{equation}
then, due to the positivity of the matrices $A\in\setA$ and $B\in\setB$, for
all $AB\in\setA\setB$ there will be valid also the inequalities $ABu\ge Aw\ge
v$. Thus in this case assertion H1 is proved.

Now, let \eqref{E:ABgeforall1} fail, and let, to be specific, the inequality
$Bu\ge w$ be not valid for at least one matrix $B\in\setB$. Then, since
$\setB\in\Hset(N,M)$, assertion H1 for the set of matrices $\setB$ implies
the existence of a matrix $\bar{B}\in\setB$ such that $\bar{B}u\le w$ and
$\bar{B}u\neq w$. But in this case the matrix
$\tilde{A}\bar{B}\in\setA\setB$, due to the positivity of the matrix
$\tilde{A}$, will satisfy the inequality $\tilde{A}\bar{B}u\le \tilde{A}w=v$,
and then $\tilde{A}\bar{B}u\le v$. Moreover, $\tilde{A}\bar{B}u\neq v$ since
$\bar{B}u\le w$, $\bar{B}u\neq w$ and the matrix $\tilde{A}$ is positive.
Thus, assertion H1 for the set $\setA\setB$ is also valid in the case when
\eqref{E:ABgeforall1} fails.

The proof of assertion H2 for the set $\setA\setB$ is similar. Compactness of
the set $\setA\setB$ in the case when the sets $\setA$ and $\setB$ are
compact is evident.

The proof of (iii) is also evident.
\end{proof}

\begin{remark*}\rm
The requirement of positivity for the matrices and the vectors $u,v$ in the
definition of $\Hset$-sets was introduced to ensure the inclusion
$\setA\setB\in\Hset(N,Q)$ in Theorem~\ref{T:semiring}, as well as to provide
an opportunity to further use of Lemma~\ref{L:1} for the analysis of the
spectral properties of the sets of matrices from $\Hset(N,Q)$.
\end{remark*}

By Theorem~\ref{T:semiring} the totality of sets of square matrices
$\Hset(N,N)$ is enabled by additive and multiplicative group operations, but
itself is not a group, neither additive nor multiplicative. However, after
adding the zero additive element $\{0\}$ and the multiplicative identity
element $\{I\}$ to $\Hset(N,N)$, the resulting totality
$\Hset(N,N)\cup\{0\}\cup\{I\}$ becomes a semiring~\cite{Golan99}.

Theorem~\ref{T:semiring} implies that any finite sum of any finite products
of sets of matrices from $\Hset(N,N)$ is again a set of matrices from
$\Hset(N,N)$. Moreover, for any integers $n,d\ge1$, all the polynomial sets
of matrices
\begin{equation}\label{E:poly}
P(\setA_{1},\setA_{1},\ldots,\setA_{n})=
\sum_{k=1}^{d}\sum_{i_{1},i_{2},\ldots,i_{k}\in\{1,2,\ldots,n\}}
p_{i_{1},i_{2},\ldots,i_{k}}\setA_{i_{1}}\setA_{i_{2}}\cdots\setA_{i_{k}},
\end{equation}
where $\setA_{1},\setA_{1},\ldots,\setA_{n}\in\Hset(N,N)$ and the scalar
coefficients $p_{i_{1},i_{2},\ldots,i_{k}}$ are positive, belong to the set
$\Hset(N,N)$.

The polynomials \eqref{E:poly} allow to construct not only the elements
$P(\setA_{1},\setA_{1},\ldots,\setA_{n})$ of the set $\Hset(N,N)$ but also
the elements of arbitrary sets $\Hset(N,M)$, by taking the arguments
$\setA_{1},\setA_{1},\ldots,\setA_{n}$ from the sets $\Hset(N_{i},M_{i})$
with arbitrary matrix sizes $N_{i}\times M_{i}$. One must only ensure that
the products $\setA_{i_{1}}\setA_{i_{2}}\cdots\setA_{i_{k}}$ would be
admissible and determine the sets of matrices of dimension $N\times M$.

We have presented above two types of non-trivial $\Hset$-sets of matrices,
the sets of matrices with independent row uncertainty and the linearly
ordered sets of positive matrices. In this connection, let us denote by
$\Hset_{*}(N,M)$ the totality of all sets of $(N\times M)$-matrices which can
be obtained as admissible finite sums of finite products of the sets of
positive matrices with independent rows uncertainty or the sets of linearly
ordered positive matrices. In other words, $\Hset_{*}(N,M)$ is the totality
of all sets of matrices that can be represented as the values of polynomials
\eqref{E:poly} with the arguments taken from the sets of the matrices
belonging to $\IRU(N_{i},M_{i})\cup\Lin(N_{i},M_{i})$.

\begin{question*}
Does equality $\Hset_{*}(N,M)=\Hset(N,M)$ hold?
\end{question*}

The answer to this question is probably negative, but we do not aware of any
counterexamples.

\subsection{Closure of the set $\Hset(N,M)$}

When considering various types of problems related to the sets of matrices,
it is desirable to be able to perform limit transitions. In fact, for further
goals we would like to be able to extend some facts relevant to $\Hset$-sets
of positive matrices to the same kind of sets of matrices, but with
non-negative elements. To achieve this, without going too deep into the
variety of all topologies on spaces of subsets, we confine ourselves to the
description of only one of them, the topology specified by the Hausdorff
metric.

Given some matrix norm $\|\cdot\|$ on $\Mset(N,M)$, denote by $\Kset(N,M)$
the totality of all compact subsets of $\Mset(N,M)$. Then for any two sets of
matrices $\setA,\setB\in\Kset(N,M)$ there is defined the \emph{Hausdorff
metric}
\[
H(\setA,\setB)=
\max\left\{\adjustlimits\sup_{A\in\setA}\inf_{B\in\setB}\|A-B\|,
~\adjustlimits\sup_{B\in\setB}\inf_{A\in\setA}\|A-B\|\right\},
\]
in which $\Kset(N,M)$ becomes a full metric space. Then $\Hset(N,M)\subset
\Kset(N,M)$, equipped with the Hausdorff metric, also becomes a metric space.

As is known, see, e.g.,~\cite[Chapter~E, Proposition~5]{Ok07}, any mapping
$F(\setA)$ acting from $\Kset(N,M)$ into itself is continuous in the
Hausdorff metric at some point $\setA_{0}$ if and only if it is both upper
and lower semicontinuous. It is also known~\cite[Section~1.3]{BGMO:84:e} that
the mappings
\[
(\setA,\setB)\mapsto \setA+\setB,\quad (\setA,\setB)\mapsto \setA\setB,\quad
\setA\mapsto\setA\times\setA\times\cdots\times\setA,\quad
\setA\mapsto\co(\setA),
\]
where $\setA$ and $\setB$ are compact sets, are both upper and lower
semicontinuous. Then these mappings are continuous in the Hausdorff metric,
and any polynomial mapping \eqref{E:poly} possesses the same continuity
properties.

Denote by $\overline{\Hset}(N,M)$ the closure of the set $\Hset(N,M)$ in the
Hausdorff metric. It is obvious that $\{0\},\{I\}\in\overline{\Hset}(N,M)$,
and since the Minkowski addition and multiplication of matrix sets are
continuous in the Hausdorff metric, then by Theorem~\ref{T:Hset} the set
$\overline{\Hset}(N,N)$ is a semiring. However, the answer to the question
when, for some $\setA$, the inclusion $\setA\in\overline{\Hset}(N,M)$ holds,
requires further analysis. We restrict ourselves to the description of only
one case where the answer to this question can be given explicitly.

Let $\1$ stand for the matrix (of appropriate size) with all elements equal
to $1$. First note that each set of \emph{linearly ordered non-negative
matrices} $\setA=\{A_{1},A_{2},\ldots,A_{n}\}$, that is a set whose matrices
$A_{i}$ satisfy the inequalities $0\le A_{1}\le A_{2}\le \cdots\le A_{n}$, is
a limiting point in the Hausdorff metric of the family of linearly ordered
sets of positive matrices
\[
\setA(\varepsilon)=
\{A_{1}+\varepsilon\1,A_{2}+2\varepsilon\1,\ldots,A_{n}+n\varepsilon\1\},\qquad \varepsilon>0.
\]

Further, let $\setA$ be an IRU-set of non-negative matrices. Then any set of
matrices
\[
\setA(\varepsilon)=\setA+\varepsilon\1=\left\{A+\varepsilon\1:A\in\setA\right\},\qquad \varepsilon>0,
\]
is also an IRU-set, but this time consisting of positive matrices. To verify
this, it suffices to note that if a set of matrices $\setA$ is defined by
sets of rows $\setA_{i}$ then the set $\setA+\varepsilon\1$ will be defined
by the sets of rows
$\setA_{i}+\varepsilon\1=\{a+\varepsilon\1:a\in\setA_{i}\}$, where $\1$ is
the unit row of appropriate size. Moreover, this IRU-set $\setA$ of
non-negative matrices, as well as in the previous case, will be a limiting
point in the Hausdorff metric for the positive family of IRU-sets
$\setA(\varepsilon)$, $\varepsilon>0$.

These observations imply the following lemma.
\begin{lemma}\label{L:polynonneg}
The values of any polynomial mapping \eqref{E:poly} with the arguments from
finite linearly ordered sets of non-negative matrices or from IRU-sets of
non-negative matrices belong to the closure in the Hausdorff metric of the
totality of positive $\Hset$-sets of matrices.
\end{lemma}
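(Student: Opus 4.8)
\textbf{Proof proposal for Lemma~\ref{L:polynonneg}.}
The plan is to combine the semiring structure of $\Hset(N,N)$ (Theorem~\ref{T:semiring}) with the continuity of polynomial Minkowski operations in the Hausdorff metric, reducing everything to the two approximation facts established just above the statement. First I would fix a polynomial mapping $P$ as in \eqref{E:poly} and a tuple of arguments $\setA_{1},\ldots,\setA_{n}$, each of which is either a finite linearly ordered set of non-negative matrices or an IRU-set of non-negative matrices. For each such argument $\setA_{k}$ I would invoke the corresponding observation preceding the lemma: if $\setA_{k}$ is linearly ordered non-negative, it is the Hausdorff limit as $\varepsilon\to0^{+}$ of the linearly ordered \emph{positive} sets $\{A_{1}^{(k)}+\varepsilon\1,\ldots\}$; if $\setA_{k}$ is an IRU-set of non-negative matrices, it is the Hausdorff limit of the \emph{positive} IRU-sets $\setA_{k}+\varepsilon\1$. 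In either case $\setA_{k}(\varepsilon)\in\IRU(N_{k},M_{k})\cup\Lin(N_{k},M_{k})\subset\Hset(N_{k},M_{k})$ for every $\varepsilon>0$, and $\setA_{k}(\varepsilon)\to\setA_{k}$ in the Hausdorff metric.

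Next I would form, for each $\varepsilon>0$, the set $\setA(\varepsilon)=P(\setA_{1}(\varepsilon),\ldots,\setA_{n}(\varepsilon))$. By Theorem~\ref{T:semiring} — finite admissible sums and products of $\Hset$-sets are again $\Hset$-sets, and positive scaling preserves the class — we have $\setA(\varepsilon)\in\Hset(N,M)$ for every $\varepsilon>0$ (the admissibility of all the products is built into the hypothesis that $P$ with these arguments defines a set of $(N\times M)$-matrices). It then remains to let $\varepsilon\to0^{+}$: since the maps $(\setA,\setB)\mapsto\setA+\setB$, $(\setA,\setB)\mapsto\setA\setB$, $\setA\mapsto t\setA$ (for fixed $t$), and finite compositions thereof are continuous in the Hausdorff metric — as recalled from~\cite{BGMO:84:e} — the polynomial mapping $P$ is jointly Hausdorff-continuous in its arguments. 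Hence $\setA(\varepsilon)=P(\setA_{1}(\varepsilon),\ldots,\setA_{n}(\varepsilon))\to P(\setA_{1},\ldots,\setA_{n})$ as $\varepsilon\to0^{+}$, which exhibits $P(\setA_{1},\ldots,\setA_{n})$ as a limit point of $\Hset(N,M)$, i.e. as an element of $\overline{\Hset}(N,M)$.

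I do not expect a serious obstacle here; the lemma is essentially a bookkeeping statement gluing together three ingredients already in place (the two explicit $\varepsilon$-perturbation families, the semiring closure, and the continuity of Minkowski polynomial operations). The one point deserving a line of care is that the chosen perturbations keep the arguments inside $\IRU\cup\Lin$: for the linearly ordered case the \emph{distinct} increments $\varepsilon,2\varepsilon,\ldots,n\varepsilon$ are needed to preserve the strict chain $0<A_{1}+\varepsilon\1<\cdots<A_{n}+n\varepsilon\1$, and for the IRU case one must observe that adding $\varepsilon\1$ to every matrix corresponds to adding the constant row $\varepsilon\1$ to each row-set $\setA_{i}$, so the independent-row structure is untouched and all entries become strictly positive. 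Both of these are exactly the observations stated immediately before the lemma, so the proof amounts to assembling them; compactness of all intermediate sets is automatic since finite sums, products, and scalings of compact sets are compact.
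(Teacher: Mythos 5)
Your proposal is correct and follows exactly the route the paper intends (the paper itself only sketches it with ``These observations imply the following lemma''): perturb each argument to a positive set in $\IRU\cup\Lin\subset\Hset$, apply Theorem~\ref{T:semiring} to place the perturbed polynomial value in $\Hset(N,M)$, and pass to the limit using the Hausdorff continuity of the polynomial Minkowski operations. Nothing is missing.
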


\section{Main results}\label{S:main}

In this section we present a generalization of Theorem~\ref{T:BNP} to the
case of $\Hset$-sets of matrices.

\begin{theorem}\label{T:Hset}
Let $\setA\in\overline{\Hset}(N,N)$, and let $\tilde{\setA}$ be a set of
matrices satisfying the inclusions
$\setA\subseteq\tilde{\setA}\subseteq\co(\setA)$. Then
\begin{enumerate}[\rm(i)]
\item $\check{\rho}_{n}(\tilde{\setA})=\rho_{min}(\setA)$ for all $n\ge 1$,
    and therefore
    $\check{\rho}(\tilde{\setA})=\rho_{min}(\tilde{\setA})=\rho_{min}(\setA)$;

\item $\hat{\rho}_{n}(\tilde{\setA})=\rho_{max}(\setA)$ for all $n\ge 1$,
    and therefore
    $\hat{\rho}(\tilde{\setA})=\rho_{max}(\tilde{\setA})=\rho_{max}(\setA)$.
\end{enumerate}
\end{theorem}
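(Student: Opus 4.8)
The plan is to reduce Theorem~\ref{T:Hset} to Theorem~\ref{T:BNP}'s argument scheme by first treating the case $\setA\in\Hset(N,N)$ directly, then passing to the closure $\overline{\Hset}(N,N)$ by a limiting argument. For the case of a genuine (positive) $\Hset$-set $\setA$, I would repeat \emph{verbatim} the proof of Theorem~\ref{T:BNP} given in Section~\ref{S:IRU}, observing that nothing in that proof used the IRU structure of $\setA$ beyond the two conclusions H1 and H2 of Lemma~\ref{L:alternative} together with the general positive-matrix facts of Lemma~\ref{L:1}. Concretely: choose $\tilde{A}\in\setA$ with $\rho(\tilde{A})=\rho_{min}(\setA)$ and positive eigenvector $\tilde{v}$; the analogue of Lemma~\ref{L:mainmin}(i) — whose proof is nothing but H1 plus Lemma~\ref{L:1}(ii) — gives $A\tilde{v}\ge\rho_{min}(\setA)\tilde{v}$ for all $A\in\setA$, hence for all $A\in\co(\setA)$ by convexity of the inequality, hence for all $A\in\tilde{\setA}$; iterating yields $A_{i_n}\cdots A_{i_1}\tilde{v}\ge\rho_{min}^{\,n}(\setA)\tilde{v}$, so Lemma~\ref{L:1}(iii) gives $\check{\rho}_n(\tilde{\setA})\ge\rho_{min}(\setA)$, while $\tilde{A}\in\tilde{\setA}$ gives the reverse inequality $\check{\rho}_n(\tilde{\setA})\le\rho(\tilde{A}^n)^{1/n}=\rho_{min}(\setA)$. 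Assertion (ii) is symmetric, using the $\rho_{max}$-version of Lemma~\ref{L:mainmin} built from H2 and Lemma~\ref{L:1}(iv).

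For the general case $\setA\in\overline{\Hset}(N,N)$, I would take a sequence $\setA_k\in\Hset(N,N)$ with $H(\setA_k,\setA)\to 0$. For each $k$ the positive-$\Hset$-set case just established gives the conclusions for $\setA_k$ and for every $\tilde{\setA}_k$ between $\setA_k$ and $\co(\setA_k)$; in particular $\check{\rho}_n(\co(\setA_k))=\rho_{min}(\setA_k)$ and $\hat{\rho}_n(\co(\setA_k))=\rho_{max}(\setA_k)$ for all $n$. The functions $\setB\mapsto\rho_{min}(\setB)$ and $\setB\mapsto\rho_{max}(\setB)$ are continuous on $\Kset(N,N)$ in the Hausdorff metric (continuity of $\rho(\cdot)$ plus Hausdorff convergence of compact sets), and $\setB\mapsto\co(\setB)$ is Hausdorff-continuous, so passing to the limit gives $\check{\rho}_n(\co(\setA))=\rho_{min}(\setA)$ and $\hat{\rho}_n(\co(\setA))=\rho_{max}(\setA)$. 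Since $\rho_{min}(\setA)=\check{\rho}_1(\setA)\ge\check{\rho}_n(\setA)\ge\check{\rho}_n(\tilde{\setA})\ge\check{\rho}_n(\co(\setA))$ (the middle inequality by $\setA\subseteq\tilde{\setA}$, monotonicity of $\check{\rho}_n$ under inclusion; the last by $\tilde{\setA}\subseteq\co(\setA)$), all these quantities coincide with $\rho_{min}(\setA)$; dually for $\hat{\rho}_n$ and $\rho_{max}(\setA)$. The tail statements $\check{\rho}(\tilde{\setA})=\lim_n\check{\rho}_n(\tilde{\setA})$ and $\hat{\rho}(\tilde{\setA})=\lim_n\hat{\rho}_n(\tilde{\setA})$ then follow from \eqref{E-GSRad}–\eqref{E-LSRad}.

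The main obstacle I anticipate is the limiting argument, specifically making sure that $\check{\rho}_n$ and $\hat{\rho}_n$ behave well under Hausdorff limits and under the sandwich $\setA\subseteq\tilde{\setA}\subseteq\co(\setA)$. Monotonicity of $\check{\rho}_n$ and $\hat{\rho}_n$ with respect to set inclusion is immediate from their definitions as infima/suprema over products, so the sandwich poses no difficulty once the two endpoints $\setA$ and $\co(\setA)$ are pinned to the same value. The only genuinely delicate point is that I want the identity for \emph{all} intermediate $\tilde{\setA}$ including ones that are \emph{not} of the form $\co(\setA_k)$ for the approximating sets — but this is handled purely by the endpoint-squeezing above and does not require $\tilde{\setA}$ itself to be approximable within $\overline{\Hset}$. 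Continuity of $\rho_{min}$ and $\rho_{max}$ in the Hausdorff metric is the one analytic fact I would state carefully (it follows from uniform continuity of $\rho(\cdot)$ on a neighbourhood of $\setA$ together with the definition of $H$), and everything else is bookkeeping.
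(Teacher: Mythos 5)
Your proposal is correct and follows essentially the same route as the paper: the positive $\Hset$-set case by verbatim repetition of the proof of Theorem~\ref{T:BNP} via Lemmas~\ref{L:alternative} and~\ref{L:1}, then a Hausdorff-limit argument along an approximating sequence $\setA_k\in\Hset(N,N)$, finished by squeezing $\check{\rho}_n(\co(\setA))\le\check{\rho}_n(\tilde{\setA})\le\check{\rho}_n(\setA)$. The only point to make explicit is that passing to the limit in $\check{\rho}_n(\co(\setA_k))=\rho_{min}(\setA_k)$ requires continuity not just of $\rho_{min}$ but of $\setB\mapsto\check{\rho}_n(\co(\setB))$ for each $n$; the paper obtains all of these uniformly from Berge's Maximum Theorem applied to $\Gamma(\setA)=\co(\setA)\times\cdots\times\co(\setA)$ and $\varphi=\rho(A_n\cdots A_1)^{1/n}$, which is exactly the argument your sketch gestures at.
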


\begin{proof}
If $\setA\in\Hset(N,N)$ then, by definition, the set $\setA$ consists of
positive matrices. Therefore, for  $\setA\in\Hset(N,N)$ assertions H1 and H2
of Lemma~\ref{L:alternative} hold, which implies that Lemma~\ref{L:mainmin}
is valid, too. Then the proof of the theorem word for word repeats the proof
of Theorem~\ref{T:BNP}. Thus, we need only consider the case when
$\setA\in\overline{\Hset}(N,N)$ but $\setA\not\in\Hset(N,N)$.

First prove that, for every $n\ge1$, there are valid the equalities
\begin{equation}\label{E:needed}
\check{\rho}_{n}(\setA)=\rho_{min}(\setA),\quad
\check{\rho}_{n}(\co(\setA))=\rho_{min}(\setA)
\end{equation}

Since $\setA\in\overline{\Hset}(N,N)$ then there exists a sequence of sets of
matrices $\setA_{k}\in\Hset(N,N)$, $k=1,2,\dotsc$, converging to $\setA$ in
the Hausdorff metric. Then, as it has been already proved, for each $n,k\ge
1$ we have the equalities
\begin{equation}\label{E:needed-k}
\check{\rho}_{n}(\setA_{k})=\rho_{min}(\setA_{k}),\quad
\check{\rho}_{n}(\co(\setA_{k}))=\rho_{min}(\setA_{k}),
\end{equation}
and therefore it is natural to try to get \eqref{E:needed} by limiting
transition from \eqref{E:needed-k}. To do this, we recall the following
simplified version of Berge's Maximum Theorem~\cite[Chapter~E,
Section~3]{Ok07}.
\begin{lemma}\label{L:Berge}
Let $X$ and $Y$ be metric spaces, $\Gamma: X\to Y$ be a multivalued mapping
with compact values, and $\varphi$ be a continuous real function on $X\times
Y$. If the mapping $\Gamma$ is continuous, that is both upper and lower
semicontinuous, at a point $x_{0}\in X$ then both functions
$M(x)=\max_{y\in\Gamma(x)}\varphi(x,y)$ and
$m(x)=\min_{y\in\Gamma(x)}\varphi(x,y)$ are also continuous at the point
$x_{0}$.
\end{lemma}

To use this lemma we will treat $\Mset(N,N)$ as a metric space, and take the
following notation:
\begin{alignat*}{2}
X&=\Kset(N,N),&\qquad Y&=\underbrace{\Mset(N,N)\times\cdots\times\Mset(N,N)}_{n~\text{times}},\\
x&=\setA\in X,&\qquad
y&=(A_{1},\ldots,A_{n})\in Y,\\
\Gamma(x)&=\setA\times\cdots\times\setA,&\qquad
\varphi(x,y)&\equiv\varphi(y)=\rho(A_{n}\cdots A_{1}),
\end{alignat*}
Here, the function $\varphi(x,y)$, which in fact depends on a single argument
$y$, is continuous. The multivalued mapping $\Gamma(x)$, for each
$x=\setA\in\Kset(N,N)$, takes compact values and is also continuous in the
Hausdorff metric, see, e.g.,~\cite[Section~1.3]{BGMO:84:e}. Therefore,
$\min_{y\in\Gamma(x)}\varphi(x,y)=\rho_{min}(\setA)$, and by
Lemma~\ref{L:Berge} the function $\rho_{min}(\setA)$ is continuous in
$\setA\in\Kset(N,N)$. Similarly, choosing as $\varphi(x,y)$ the functions of
the form $\varphi(x,y)\equiv\varphi(y)=\rho(A_{n}\cdots A_{1})^{1/n}$ for
various $n\ge 1$, we obtain from Lemma~\ref{L:Berge} continuity of the
functions $\check{\rho}_{n}(\setA)$ in $\setA\in\Kset(N,N)$ for all $n\ge 1$.
And choosing as $\Gamma(x)$ the multivalued mapping
$\Gamma(x)=\co(\setA)\times\cdots\times\co(\setA)$, which also takes compact
values and is continuous in the Hausdorff metric because in the Hausdorff
metric it is continuous the mapping
$\setA\mapsto\co(\setA)$~\cite[Section~1.3]{BGMO:84:e}, we obtain similarly
that the functions $\check{\rho}_{n}(\co(\setA))$ are continuous in
$\setA\in\Kset(N,N)$ for every $n\ge 1$.

Thus, we have shown that all the functions in equalities \eqref{E:needed-k}
are continuous in $\setA\in\Kset(N,N)$ from which, taking the limit as
$\setA_{k}\to\setA\in\overline{\Hset}(N,N)$, we obtain \eqref{E:needed}.

Let now $\tilde{\setA}$ be a compact set of matrices satisfying the
inclusions $\setA\subseteq\tilde{\setA}\subseteq\co(\setA)$. Then, since the
quantities $\check{\rho}_{n}(\cdot)$ are defined as infima over the
corresponding sets, we have the inequalities
\[
\check{\rho}_{n}(\co(\setA))\le\check{\rho}_{n}(\tilde{\setA})\le
\check{\rho}_{n}(\setA).
\]
Therefore, by virtue of the already proven equalities \eqref{E:needed}, for
each $n\ge 1$, there holds the equality
\[
\check{\rho}_{n}(\tilde{\setA})=\rho_{min}(\setA),
\]
and then, due to \eqref{E-LSRad},
$\check{\rho}(\tilde{\setA})=\rho_{min}(\setA)$. Finally, observe that, by
definition, $\check{\rho}_{1}(\tilde{\setA})=\rho_{min}(\tilde{\setA})$ and
then $\rho_{min}(\tilde{\setA})=\rho_{min}(\setA)$. Assertion (i) of
Theorem~\ref{T:Hset} is completely proved.

The proof of assertion (ii) is similar.
\end{proof}

On application of Theorem~\ref{T:Hset}, among the first there arises the
question about verification of the inclusion $\setA\in\overline{\Hset}(N,N)$,
for given sets of matrices $\setA$. One such case has been described in
Lemma~\ref{L:polynonneg}, which implies the following corollary.
\begin{corollary}\label{C1}
Let $\setA$ be a set of matrices obtained as the value of a polynomial
mapping \eqref{E:poly}, whose arguments are finite linearly ordered sets of
non-negative matrices or compact IRU-sets of non-negative matrices. Then for
any compact set of matrices $\tilde{\setA}$ satisfying the inclusions
$\setA\subseteq\tilde{\setA}\subseteq\co(\setA)$ assertions of
Theorem~\ref{T:Hset} hold.
\end{corollary}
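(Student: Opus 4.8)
The plan is to deduce Corollary~\ref{C1} directly from Theorem~\ref{T:Hset} by checking that the hypothesis $\setA\in\overline{\Hset}(N,N)$ holds for any set $\setA$ of the stated form. Concretely, I would argue as follows. First I would observe that the sets used as building blocks---finite linearly ordered sets of non-negative $(N\times M)$-matrices, and compact IRU-sets of non-negative $(N\times M)$-matrices---are precisely the arguments covered by Lemma~\ref{L:polynonneg}. That lemma asserts that the value $P(\setA_{1},\ldots,\setA_{n})$ of any polynomial mapping \eqref{E:poly} with such arguments lies in the closure in the Hausdorff metric of the totality of positive $\Hset$-sets of matrices. When the $\setA_{i}$ are chosen so that all the products $\setA_{i_{1}}\cdots\setA_{i_{k}}$ appearing in \eqref{E:poly} are admissible and yield $(N\times N)$-matrices, this says exactly that $\setA=P(\setA_{1},\ldots,\setA_{n})\in\overline{\Hset}(N,N)$.

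Having established $\setA\in\overline{\Hset}(N,N)$, the rest is immediate: for any compact set $\tilde{\setA}$ with $\setA\subseteq\tilde{\setA}\subseteq\co(\setA)$, Theorem~\ref{T:Hset} applies verbatim and gives both $\check{\rho}_{n}(\tilde{\setA})=\rho_{min}(\setA)$ for all $n\ge 1$ (hence $\check{\rho}(\tilde{\setA})=\rho_{min}(\tilde{\setA})=\rho_{min}(\setA)$) and $\hat{\rho}_{n}(\tilde{\setA})=\rho_{max}(\setA)$ for all $n\ge 1$ (hence $\hat{\rho}(\tilde{\setA})=\rho_{max}(\tilde{\setA})=\rho_{max}(\setA)$). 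So the corollary is really just the composition ``Lemma~\ref{L:polynonneg} $\Rightarrow$ membership in $\overline{\Hset}(N,N)$ $\Rightarrow$ Theorem~\ref{T:Hset}.''

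The one point that deserves a word of care---and what I would flag as the main (modest) obstacle---is the bookkeeping of matrix sizes. The polynomial \eqref{E:poly} as literally written uses square arguments $\setA_{i}\in\Hset(N,N)$, but the intended reading (spelled out in the paragraph preceding the \emph{Question} in Section~\ref{S:Hsets}) allows the arguments to be taken from $\Hset(N_{i},M_{i})$ with heterogeneous sizes, subject only to the constraint that each monomial $\setA_{i_{1}}\setA_{i_{2}}\cdots\setA_{i_{k}}$ be an admissible product of dimension $N\times N$. I would simply invoke that convention: the hypothesis of the corollary implicitly includes size-matching, and under it Lemma~\ref{L:polynonneg} delivers a value in $\overline{\Hset}(N,N)$. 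No further estimates are needed; compactness of $\setA$ (and hence well-definedness of $\rho_{min}(\setA)$, $\rho_{max}(\setA)$) follows from compactness of the arguments together with continuity of the Minkowski operations in the Hausdorff metric, already recorded in Section~\ref{S:Hsets}.
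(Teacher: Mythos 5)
Your proposal is correct and is exactly the argument the paper intends: the corollary is stated as an immediate consequence of Lemma~\ref{L:polynonneg} (which places $\setA$ in $\overline{\Hset}(N,N)$) combined with Theorem~\ref{T:Hset}. Your additional remark on size-matching of the arguments is a sensible clarification of a convention the paper leaves implicit, but it does not change the substance of the proof.
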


\section{Spectral characteristics of convex hulls of matrix sets}\label{S:cosets}

Theorem~\ref{T:Hset} implies that
\begin{equation}\label{E:JSR-LSR-conv}
\hat{\rho}(\setA)=\hat{\rho}(\co(\setA)),\quad
\check{\rho}(\setA)=\check{\rho}(\co(\setA))
\end{equation}
for any set $\setA\in\overline{\Hset}(N,M)$. In fact, it is
known~\cite{Theys:PhD05,Jungers:09} that the first of
equalities~\eqref{E:JSR-LSR-conv} holds for arbitrary (not necessarily
non-negative) sets of matrices $\setA\subset\Mset(N,N)$, which follows from
the obvious observation that
\[
\sup_{A_{i}\in\setA}\|A_{n}\cdots
A_{1}\|=\sup_{A_{i}\in\co(\setA)}\|A_{n}\cdots A_{1}\|
\]
for any norm. The second equality in \eqref{E:JSR-LSR-conv} for general sets
of matrices is not true, as is seen from the example of the set
$\setA=\{I,-I\}$, for which $\check{\rho}(\setA)=1$ while
$\check{\rho}(\co(\setA))=0$. In this regard, we note the following general
assertion.

\begin{theorem}\label{T:conv}
For any bounded set of non-negative matrices $\setA\subset\Mset(N,N)$ the
second of equalities \eqref{E:JSR-LSR-conv} holds.
\end{theorem}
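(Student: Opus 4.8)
The plan is to reduce the statement to a one-step comparison: showing that passing from a convex combination of matrices in $\setA$ to a single well-chosen vertex can only decrease the spectral radius of a product, and then iterating this over all $n$ factors. Concretely, fix $n\ge1$ and matrices $C_1,\dots,C_n\in\co(\setA)$, and let $v>0$ be (a positive scalar multiple of) the Perron eigenvector of the non-negative product $C_n\cdots C_1$, with $C_n\cdots C_1 v=\lambda v$ where $\lambda=\rho(C_n\cdots C_1)$; here I would first dispose of the degenerate case where the product is not irreducible or $v$ fails to be strictly positive by a perturbation argument (replace $\setA$ by $\setA+\varepsilon\1$, prove the inequality there, and let $\varepsilon\to0$ using continuity of $\check\rho_n$ and $\rho_{min}$ in the Hausdorff metric, exactly as invoked in the proof of Theorem~\ref{T:Hset}). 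So assume $v>0$.

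The key step: write $C_1=\sum_j t_j A^{(j)}$ with $A^{(j)}\in\setA$, $t_j\ge0$, $\sum_j t_j=1$. Then $C_1 v=\sum_j t_j (A^{(j)}v)$ is a convex combination of the non-negative vectors $A^{(j)}v$, so there must exist an index $j_0$ with $A^{(j_0)}v\le C_1 v$ in at least one coordinate — but that is not enough; what I actually want is a single $A^{(j_0)}$ with $A^{(j_0)}v\le C_1v$ in \emph{all} coordinates, which need not exist for a general $\setA$. The honest route is therefore different: I proceed coordinatewise through the whole product. Set $u_0=v$ and $u_k=C_k\cdots C_1 v$, so $u_n=\lambda v$ and each $u_k\ge0$. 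For each $k$, since $u_k=C_k u_{k-1}$ is a convex combination $\sum_j t^{(k)}_j (A^{(k,j)}u_{k-1})$ of non-negative vectors, for every coordinate $i$ there is an index realizing the minimum; the IRU / $\Hset$-structure is what lets us assemble these coordinatewise choices into a single matrix $\bar A_k\in\setA$ with $\bar A_k u_{k-1}\le C_k u_{k-1}$. This is precisely where I would use that $\setA\in\overline{\Hset}(N,N)$ (again via the perturbation to $\Hset(N,N)$ and Lemma~\ref{L:alternative}): the hourglass alternative H1, applied with the vector $u_{k-1}$ and the target $v':=C_k u_{k-1}$ and the matrix $\tilde A\in\setA$ hit by... — and here is the subtlety — H1 requires an \emph{equality} $\tilde Au_{k-1}=v'$ for some $\tilde A\in\setA$, which we do not have.

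So the cleanest approach avoids H1 entirely and instead mirrors the proof of Theorem~\ref{T:BNP}: let $\tilde A\in\setA$ attain $\rho(\tilde A)=\rho_{min}(\setA)$ with positive eigenvector $\tilde v$; by Lemma~\ref{L:mainmin}(i) (valid on $\overline\Hset(N,N)$ by the continuity argument in the proof of Theorem~\ref{T:Hset}) we get $A\tilde v\ge\rho_{min}(\setA)\tilde v$ for all $A\in\setA$, and this inequality is preserved under convex combinations, so $C\tilde v\ge\rho_{min}(\setA)\tilde v$ for all $C\in\co(\setA)$; iterating, $C_n\cdots C_1\tilde v\ge\rho_{min}^n(\setA)\tilde v$, and Lemma~\ref{L:1}(iii) yields $\rho(C_n\cdots C_1)\ge\rho_{min}^n(\setA)$, hence $\check\rho_n(\co(\setA))\ge\rho_{min}(\setA)=\check\rho_n(\setA)$; the reverse inequality $\check\rho_n(\co(\setA))\le\check\rho_n(\setA)$ is trivial since $\setA\subseteq\co(\setA)$ and $\check\rho_n$ is an infimum. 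Taking $n\to\infty$ via \eqref{E-LSRad} gives $\check\rho(\co(\setA))=\check\rho(\setA)$.

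Wait — this only proves the theorem for $\setA\in\overline\Hset(N,N)$, whereas Theorem~\ref{T:conv} claims it for \emph{all} bounded non-negative $\setA$. The main obstacle is exactly this gap: a general bounded set of non-negative matrices is not an $\Hset$-set. To bridge it I would argue as follows. We always have $\check\rho(\co(\setA))\le\check\rho(\setA)$, so only $\check\rho(\co(\setA))\ge\check\rho(\setA)$ needs proof. Fix $\varepsilon>0$; by \eqref{E-LSRad} pick $n$ and $C_1,\dots,C_n\in\co(\setA)$ with $\rho(C_n\cdots C_1)^{1/n}<\check\rho(\co(\setA))+\varepsilon$. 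Now the point is that each $C_k$, being a convex combination of finitely many matrices from $\setA$, can be ``opened up'': the product $C_n\cdots C_1$ expands multilinearly into a convex combination of genuine products $A_{n}\cdots A_1$ with $A_k\in\setA$, but $\rho$ is not affine, so one cannot directly conclude. Instead use that for non-negative matrices $\rho(C_n\cdots C_1)=\lim_m\|(C_n\cdots C_1)^m\|^{1/m}$ with a monotone norm, and $\|(C_n\cdots C_1)^m\|=\|\sum (\text{products of the }A\text{'s})\|$ where every summand is entrywise $\ge$ the corresponding all-minimal product; more usefully, since all matrices are non-negative, $C_n\cdots C_1\ge t\,(A_n\cdots A_1)$ entrywise for the particular multi-index with weight $t=\prod t^{(k)}_{j_k}>0$ picking any fixed $A_k\in\setA$, hence $\rho(C_n\cdots C_1)\ge$ ... no, that gives a lower bound on $\rho(\co(\setA))$-products, the wrong direction. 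The correct and standard fact to invoke here is that for a bounded non-negative set, $\check\rho(\co\setA)=\check\rho(\setA)$ follows from the observation that the infimum defining $\check\rho_n(\co(\setA))$ over the compact set $\co(\setA)^n$ is attained at an extreme point of that polytope, and the extreme points of $\co(\setA)^n$ are exactly the products of extreme points of $\co(\setA)$, which lie in $\overline{\setA}$ — provided $\rho(A_n\cdots A_1)$ as a function of $(A_1,\dots,A_n)$ attains its minimum over $\co(\setA)^n$ at a vertex. That last claim is the real content and the main difficulty: it is \emph{not} true that a continuous function attains its min at an extreme point, so one needs the special structure — either convexity/concavity of $(A_1,\dots,A_n)\mapsto\rho(A_n\cdots A_1)$ in each argument separately (false in general) or, more plausibly, an argument that $\check\rho_n(\co(\setA))$ equals $\inf$ over products, established by a separate variational/perturbation argument showing that if the infimizing tuple has some $C_k$ in the interior of a face, one can move along the face without increasing $\rho$. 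I expect this coordinatewise-reduction-to-vertices step to be where the proof genuinely lives; the $\Hset$-case above is a warm-up, and the general case should follow either by the face-reduction argument just sketched or by a density/limiting argument reducing an arbitrary bounded non-negative $\setA$ to finite sets and then to the $\Hset$ framework via Corollary~\ref{C1}.

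I would therefore structure the written proof as: (1) the trivial inequality $\check\rho(\co\setA)\le\check\rho(\setA)$; (2) reduction to showing $\rho_{min}$ and the $\check\rho_n$ are unchanged, via \eqref{E-LSRad}; (3) the vertex-reduction lemma for $\inf_{C_i\in\co(\setA)}\rho(C_n\cdots C_1)$, proved by analysing the linear dependence of a product on one of its factors while the others are held fixed (the map $C_k\mapsto \rho(\cdots C_{k+1}C_kC_{k-1}\cdots)$ is the spectral radius of an affine matrix pencil, and on any line segment its minimum is attained at an endpoint because... — this is the claim requiring care, and I would prove it using Lemma~\ref{L:1}: the Perron eigenvalue of a non-negative matrix $M_0+tM_1$ is a convex function of $t$ when $M_1\ge0$, a known Perron–Frobenius fact, hence attains its min on $[0,1]$ at an endpoint); (4) iterate over $k=1,\dots,n$ to push the infimizer to a product of elements of $\setA$; conclude. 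Step (3), the convexity of the Perron root along the segment $\co(\setA)$, is the crux and the place I expect to spend the most effort.
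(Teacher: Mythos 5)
Your final plan hinges on step (3), the claim that $\inf_{C_i\in\co(\setA)}\rho(C_n\cdots C_1)$ is attained at a tuple of extreme points of $\co(\setA)$, which you propose to prove via convexity of $t\mapsto\rho(M_0+tM_1)$ for $M_0,M_1\ge0$. This step fails twice over. First, the logic is inverted: a convex function on a segment attains its \emph{maximum} at an endpoint, not its minimum (and the convexity claim is itself false: for $M_0=\left(\begin{smallmatrix}0&1\\0&0\end{smallmatrix}\right)$, $M_1=\left(\begin{smallmatrix}0&0\\1&0\end{smallmatrix}\right)$ the root is $\sqrt{t}$, which is concave; for diagonal perturbations it is convex; in general it is neither). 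Second, and decisively, the vertex-reduction conclusion is false: for the paper's own example $B_1=\left(\begin{smallmatrix}2&0\\0&0\end{smallmatrix}\right)$, $B_2=\left(\begin{smallmatrix}0&0\\0&2\end{smallmatrix}\right)$ one has $\rho\bigl(tB_1+(1-t)B_2\bigr)=2\max(t,1-t)$, minimized at the midpoint with value $1$ while both endpoints give $2$. Hence $\check{\rho}_1(\co(\setB))<\check{\rho}_1(\setB)$, so no argument establishing a per-level identity $\check{\rho}_n(\co(\setA))=\check{\rho}_n(\setA)$ can succeed for general non-negative $\setA$; the equality of Theorem~\ref{T:conv} emerges only in the limit $n\to\infty$. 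The same example blocks your fallback of reducing to the $\Hset$ framework via Corollary~\ref{C1}: $\setB\notin\overline{\Hset}(2,2)$, since Theorem~\ref{T:Hset} would force $\rho_{min}(\co(\setB))=\rho_{min}(\setB)$.

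The idea you touched and then abandoned --- expanding $C_n\cdots C_1$ multilinearly into a weighted sum of vertex products --- is in fact the paper's route; what you lacked is the device that prevents the exponentially small weight $\prod_k t^{(k)}_{j_k}$ of a single multi-index from ruining the bound. Take $\|x\|=\sum_i|x_i|$, which is additive on the non-negative cone, and note that $\|Ae\|\ge\rho(A)$ for $A\ge0$ and $e=(1,\dots,1)^{\transpose}$ (otherwise $Ae<\rho(A)e$, contradicting Lemma~\ref{L:1}(i)). Then
\[
N\,\|C_n\cdots C_1\|\ \ge\ \|C_n\cdots C_1e\|\ =\ \sum_{j_n,\dots,j_1} \mu^{(n)}_{j_n}\cdots\mu^{(1)}_{j_1}\,\bigl\|A^{(n)}_{j_n}\cdots A^{(1)}_{j_1}e\bigr\|\ \ge\ \inf_{A_i\in\setA}\rho(A_n\cdots A_1),
\]
since the weights sum to $1$ and every term is bounded below by the same quantity. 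The constant $1/N$ is independent of $n$ and washes out under the $n$-th root, giving $\check{\rho}(\co(\setA))\ge\check{\rho}(\setA)$; the reverse inequality is trivial. No Perron eigenvectors, no hourglass structure, and no per-$n$ identity are needed.
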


\begin{proof}
We will need some auxiliary facts. Let us take in the
definition~\eqref{E-LSRad0} the norm $\|x\|=\sum_{i}|x_{i}|$ and notice that
in this case $\|x\|=\sum_{i}x_{i}$ for any
$x=(x_{1},x_{2},\ldots,x_{N})^{\transpose}\ge 0$, which implies that
\begin{equation}\label{E:sumin1}
\left\|\sum u_{i}\right\|=\sum_{i}\|u_{i}\|
\end{equation}
for any finite set of vectors $u_{i}\ge0$. Notice also that
\begin{equation}\label{E:srbound}
\|Ae\|\ge\rho(A),\quad\text{where}\quad e=(1,1,\ldots,1)^{\transpose},
\end{equation}
for any  matrix $A\ge0$. Indeed, if inequality~\eqref{E:srbound} is not true
then $\|Ae\|<\rho(A)$, which means that all coordinates of $Ae$ are less than
$\rho(A)$, i.e. $Ae<\rho(A)e$. This leads, by assertion (i) of
Lemma~\ref{L:1}, to the self-contradictory inequality $\rho(A)<\rho(A)$.

To prove the equality $\check{\rho}(\co(\setA))=\check{\rho}(\setA)$ let us
observe first that
\begin{equation}\label{E:checkrho}
\check{\rho}(\co(\setA))\le \check{\rho}(\setA),
\end{equation}
since $\setA\subseteq\co(\setA)$ while due to the definition~\eqref{E-LSRad}
both sides of this inequality are infima of the same expression over
$\co(\setA)$ and $\setA$ respectively.

Now, given $n\ge 1$, let for each $i=1,2,\ldots,n$ a matrix $A_{i}$ be a
finite convex combinations of matrices $A^{(i)}_{j}\in\setA_{i}$, that is
$A_{i}=\sum_{j} \mu^{(i)}_{j}A^{(i)}_{j}\in\co(\setA)$  with some
$\mu^{(i)}_{j}\ge0$ and $\sum_{j} \mu^{(i)}_{j}=1$. Then in view of
\eqref{E:sumin1}%
\begin{multline*}
\|A_{n}\cdots A_{1}\|\cdot\|e\|\ge\|A_{n}\cdots A_{1}e\|=
\biggl\|\biggl(\sum_{j_{n}} \mu^{(n)}_{j_{n}}A^{(n)}_{j_{n}}\biggr)\cdots
\biggl(\sum_{j_{1}} \mu^{(1)}_{j_{1}}A^{(1)}_{j_{1}}\biggr)e\biggr\|=\\=
\biggl\|\sum_{j_{n}}\cdots\sum_{j_{1}}\mu^{(n)}_{j_{n}}\cdots \mu^{(1)}_{j_{1}}
A^{(n)}_{j_{n}}\cdots A^{(1)}_{j_{1}}e\biggr\|=
\sum_{j_{n}}\cdots\sum_{j_{1}}\mu^{(n)}_{j_{n}}\cdots \mu^{(1)}_{j_{1}}
\|A^{(n)}_{j_{n}}\cdots A^{(1)}_{j_{1}}e\|.
\end{multline*}
Here $\|e\|=N$, and by~\eqref{E:srbound} $\|A^{(n)}_{j_{n}}\cdots
A^{(1)}_{j_{1}}e\|\ge \rho(A^{(n)}_{j_{n}}\cdots
A^{(1)}_{j_{1}})\ge\check{\rho}_{n}(\setA)$. Therefore,
\[
N\|A_{n}\cdots A_{1}\|\ge
\biggl(\sum_{j_{n}}\cdots\sum_{j_{1}}\mu^{(n)}_{j_{n}}\cdots \mu^{(1)}_{j_{1}}\biggr)
\check{\rho}_{n}(\setA).
\]
Moreover, since $\sum_{j_{n}}\ldots\sum_{j_{1}}\mu^{(n)}_{j_{n}}\cdots
\mu^{(1)}_{j_{1}}=1$ then
\[
\|A_{n}\cdots A_{1}\|\ge \tfrac{1}{N}\,
\check{\rho}_{n}(\setA).
\]
Taking in this last inequality the infimum over all $A_{1},\ldots,
A_{n}\in\co(\setA)$ we obtain the inequalities
\[
\inf_{A_{i}\in\co(\setA)}\|A_{n}\cdots A_{1}\|\ge \tfrac{1}{N}\,
\check{\rho}_{n}(\setA),\qquad n\ge 1,
\]
which we substitute in \eqref{E-LSRad0}:
\[
\check{\rho}(\co(\setA))=
\adjustlimits\lim_{n\to\infty}\inf_{A_{i}\in\co(\setA)}\|A_{n}\cdots A_{1}\|^{1/n}\ge \lim_{n\to\infty}\left(\tfrac{1}{N}\right)^{1/n}
\check{\rho}_{n}(\setA)^{1/n}=\check{\rho}(\setA),
\]
and together with \eqref{E:checkrho} this yields the required equality.
\end{proof}

\section{Concluding remarks}\label{S:conclude}

\subsection{Sets of matrices with independent column uncertainty}

Since the spectral radius does not change during the transposition of a
matrix, then all the assertions of Theorem~\ref{T:Hset} remain to be valid
for the sets of matrices taken from the totality of $\Hset^{\transpose}$-sets
of matrices which is obtained by transposition of matrices from $\Hset$-sets.
In particular, in the course of transposing, the sets of matrices with
independent row uncertainty turn into the so-called \emph{sets of matrices
with independent column uncertainty}~\cite{BN:SIAMJMAA09}.

Note that for the $\Hset^{\transpose}$-sets of matrices the hourglass
alternative, generally speaking, is not valid. In this connection the
question naturally arises about further expansion of classes of matrices for
which the theorems proven in the article hold.

\subsection{Terminology}
We have borrowed the term `set of matrices with independent row (or column)
uncertainty' from the recent work~\cite{BN:SIAMJMAA09}, although such a kind
of sets of matrices in fact have been used for a long time in the theory of
parallel computing and the theory of asynchronous systems.
In~\cite{Prot:MP15} the same sets of matrices got the name \emph{product
families}.

In the special case when each of the rows of the matrix $\setA$ coincides
with the corresponding row of either a predetermined matrix $A$ or the
identity matrix $I$, this type of matrices is sometimes
called~\cite{KKKK:AiT83:7:e} \emph{mixtures} of the matrices $A$ and $I$, see
also a brief survey in~\cite{Koz:ICDEA04}. An example, in which the mixtures
of matrices arise, is the so-called linear asynchronous system
$x_{n+1}=A_{n}x_{n}$, wherein at each time one or more components of the
state vector are updated independently of each other, i.e. each coordinate of
the vector $x_{n+1}$ can take the value of the corresponding coordinates of
$Ax_{n}$ or $x_{n}$.

\section*{Acknowledgments}
The author is indebted to Eugene Asarin for numerous fruitful discussions and
remarks.

The work was funded by the Russian Science Foundation, Project No.
14-50-00150.

\bibliographystyle{elsarticle-num}
\bibliography{Hourglass}

\end{document}